\documentclass[11pt,a4paper]{article}
\usepackage{amssymb,comment,amsmath,amsthm}
\usepackage{graphicx}
\usepackage{color}
\usepackage{titling,titlesec}
\usepackage{url}
\usepackage{amsthm,lipsum}
\usepackage{geometry}
\titleformat{\subsection}[runin]
{\normalfont\bfseries}{}{0em}{}[.]
\newcounter{foo}

\newfont{\blb}{msbm10 scaled\magstep1}
\newfont{\comp}{cmr12 scaled\magstep1}
\newfont{\compb}{cmr10 scaled\magstep2}
\newfont{\sbb}{cmssbx10 scaled\magstep3}
\newfont{\sbbb}{cmssbx10 scaled\magstep5}
\newfont{\sbs}{cmssbx10 scaled\magstep1}
\newtheorem{theorem}{Theorem}
\newtheorem{lemma}[subsection]{Lemma}
\newtheorem{claim}[subsection]{Claim}

\newtheorem{definition}[foo]{Definition}

\newcommand{\FF}{\mathbb F}

\newcommand{\vep}{\varepsilon}
\newcommand{\End}{\mathrm{End}}
\newcommand{\Ter}{\mathrm{Ter}}
\newcommand{\good}{\mathrm{Good}}
\newcommand{\Cay}{\mathrm{Cay}}
\allowdisplaybreaks
\title{Rainbow paths in directed digraphs}
\author{
Chase Wilson\thanks{Department of Mathematics, University of California, San Diego, CA, 92093-0112 USA. E-mail: c7wilson@ucsd.edu}}
\begin{document}
\date{ }
\setlength{\droptitle}{-2.5cm}
\maketitle
\vspace{-0.5in}
\begin{abstract}
An old problem in combinatorial group theory asks, given a group $\Gamma$ and a subset $S \subseteq \Gamma$, when does there exist an ordering $s_1, \cdots, s_k$ of the elements of $S$ such that the partial products $\prod_{i = 1}^j s_i$, $1 \leq j \leq k$, are all distinct. If such an ordering exists, we call $S$ rearrangeable. There have been many conjectures about rearrangeable subsets, the most general being that for every group, every subset not containing the identity element is rearrangeable. We prove an asymptotic version of this: For any Group $\Gamma$ and any subset $S \subseteq \Gamma$, there exists a rearrangeable set $S' \subseteq S$ such that $|S'| = |S| - o(|S|)$.
To do this we build upon the work of Buci\'c, Frederickson, M\"uyesser, Pokrovskiy, and Yepremyan focusing on the following problem of independent interest in Graph Theory. If $G$ is a d-regular properly colored directed graph does there exist a rainbow path of length $d - 1$? We establish an asymptotic version of this, proving that $G$ contains a rainbow path of length $d - o(d)$. This solves two problems given by Buci\'c, Frederickson, et al. and proves the above result on rearrangeable subsets of groups by considering the Cayley graph of $\Gamma$ with (not necessarily generating) set $S$.
\end{abstract}
\begin{section}{Introduction}
An old question in combinatorial group theory concerns the ability to order the elements of a subset of a group such that the partial products are all distinct. Given a group $\Gamma$ and a subset $S \subseteq \Gamma$, we want to know if there exists an ordering $s_1, \cdots, s_k$ of $S$ such that the partial products $\prod_{i = 1}^j s_j$ for $(1 \leq j \leq k)$ are all distinct. If such an ordering exists, $S$ is called \textit{rearrangeable}. This concept is closely related to the study of sequenceable groups, introducted by Gordon \cite{Go} in 1961 which are groups $\Gamma$ for which the set $S = \Gamma$ is rearrangeable. Gordon characterized all sequenceable abelian groups. Keedweel \cite{Ke} conjectured that the only non-sequenceable groups are the dihedral groups of order $6$ and $8$ and the quaternion group. This has recently been resolved for all sufficently large groups by M\"uyesser and Pokrovskiy \cite{MP}. A related problem posed by Ringel asks for an ordering $a_1, \cdots, a_n$ of the elements of a group $\Gamma$ such that the partial products $a_1, a_1 a_2, \cdots, a_1 \cdots a_{n- 1}$ are all distinct and the total product $a_1 \cdots a_n$ is the identity. This has also been resolved by M\"uyesser and Pokrovskiy \cite{MP} for sufficently large groups.

\medskip

The study of rearrangeable subsets began with a conjecture by Graham \cite{Gr} in 1971, which states that for the cyclic group $\mathbb F_p$, every subset $S \subseteq \mathbb F_p \setminus \{ 0 \}$ is rearrangeable. Despite considerable effort, this conjecture remains open for $S$ that are not either very large or very small. The Combinatorial Nullstellensatz can be used to prove the conjecture when $|S| \leq 12$. Kravitz \cite{Kr} and Sawin \cite{Sa} recently each independently showed the conjecture is true when $|S| \leq \log p / \log \log p$. Kravitz \cite{Kr} then improved this to $|S| \leq e^{ (\log p)^{1/4}}$. On the other end of the spectrum, direct constructions show the conjecture holds when $|S| \geq p - 3$ and a recent breakthrough result by M\"uyesser and Pokrovskiy \cite{MP} implies the conjecture holds when $|S| = p - o(p)$.

\medskip

Generalizing Graham's conjecture, Archdeacon, Dinitz, Mattern, and Stinson \cite{ADMS} conjectured that for any group $\Gamma$, every subset $S \subseteq \Gamma \setminus \{e \}$ is rearrangeable. Alspach \cite{Al} proposed an even stronger conjecture, stating that if the product of the elements of $S$ is not the identity, then an ordering exists such that all partial products are distinct and not the identity. Many other slight variations on this, for example by Costa, Morini, Pasotti \cite{CMPP}, and by Costa, Fiore, Ollis and Rovner-Frydman \cite{CDFO} have been conjectured. Buci\'c, Frederickson, M\"uyesser, Pokrovskiy, and Yepremyan \cite{BFMPY} showed that for any group $\Gamma$ and any subset $S \subseteq \Gamma$, there exists an ordering of the elements of $S$ such that all but $o(|S|)$ of the partial products are distinct. They also proved the much stronger asymptotic result that for any group $\Gamma$ and any subset $S \subseteq \Gamma$, there exists a rearrangeable subset of $S$ of size $|S| - o(|S|)$ in each of the following cases
\begin{itemize}
\item $\Gamma = \mathbb F_2^k$ for some $k$.
\item $\Gamma = \mathbb F_p$ for some prime $p$ and $|S| \geq p^{3/4 + o(1)}$.
\item $|S| = \Omega(|\Gamma|)$
\end{itemize}
 In a breakthrough result \cite{BBK}, Bedert, Buci\c, Kravitz, Montgomery, and M\"uyesser proved the full Graham's conecture for the group $\mathbb{F}^k_2$ \cite{BBK}. They also showed that Graham's conjecture holds for arbitrary groups when $S$ has size at least $|\Gamma|^{1- c}$ for some absolute constant $c$ depending only on $\Gamma$. To do this they used a somewhat similar approach of Buci\'c, Frederickson, M\"uyesser, Pokrovskiy, and Yepremyan \cite{BFMPY} combined with the absorbtion method. Shortly after that, Pham and Sauermann, using a completly different probabilistic approach showed that for every $\alpha \in (0, 1)$ the conjecture holds in $\Gamma = \mathbb{F}_p$ and $|S| \leq p^{1 - \alpha}$ as long as $S$ is sufficiently large as a function of $\alpha$ \cite{PS}. These results together completly resovel Graham's original conjecture in $\mathbb{F}_p$ for $p$ sufficiently large. 

\medskip

While the case of Graham's conjecture for $\Gamma = \mathbb{F}_p$ is now resolved, the more general case where $\Gamma$ is an arbitrary finite group is still wide open. In this paper, we make progress towards the general case by building upon the techniques of Buci\'c, 
Frederickson, M\"uyesser, Pokrovskiy, and Yepremyan and showing that their strong asymptotic result holds with no assumptions on $S$ or $\Gamma$.

\medskip

\begin{theorem} \label{main_group} For any finite group $\Gamma$ and any subset $S \subseteq \Gamma$, there exists a rearrangeable subset $S' \subseteq S$ such that $|S'| = |S| - o(|S|)$.
\end{theorem}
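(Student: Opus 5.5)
We reduce Theorem~\ref{main_group} to a statement about rainbow paths in properly coloured directed graphs, as the abstract indicates. We may assume $e\notin S$, since removing one element costs only $O(1)$. Consider the Cayley digraph $G=\Cay(\Gamma,S)$ on vertex set $\Gamma$, with an arc $g\to gs$ of colour $s$ for every $g\in\Gamma$ and $s\in S$. Every vertex has out-degree and in-degree exactly $d=|S|$. The colouring is proper: the out-arcs at $g$ have distinct colours, and so do the in-arcs at $h$, since $s$ is determined by $g^{-1}h$. A rainbow directed path $g_0\to g_0s_1\to g_0s_1s_2\to\cdots\to g_0s_1\cdots s_k$ has distinct vertices, so the partial products $s_1\cdots s_j$ ($0\le j\le k$) are distinct after left-multiplying by $g_0^{-1}$; its colours are distinct, so $S'=\{s_1,\dots,s_k\}$ is a rearrangeable subset of $S$ of size $k$. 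It therefore suffices to prove the graph-theoretic theorem: \emph{every $d$-regular properly coloured digraph contains a rainbow path of length $d-o(d)$.} Note that no assumption on $|\Gamma|$ versus $d$ should enter, because this is exactly what the earlier results of \cite{BFMPY} needed.

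For the graph statement I would follow the \cite{BFMPY} strategy of extending a longest rainbow path with switchings. Take a rainbow path $P$ of maximum length $\ell$, and suppose $\ell\le(1-\vep)d$. Let $C$ be the set of colours unused by $P$, so $|C|\ge\vep d$. From the endpoint $v$ of $P$, any arc of a colour in $C$ must go back into $V(P)$, or else $P$ extends. This gives at least $\vep d$ arcs from $v$ landing on $P$. Each landing vertex $u$ lets us reroute: cut $P$ at $u$ and append the arc $v\to u$. Its successor segment is reversed in the undirected sense, but in a digraph it is not reversible. So instead I would use the standard rotation: the new path runs from the start of $P$ to the predecessor of $u$, or an analogous re-rooting, producing a new maximal rainbow path with a new endpoint.

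Iterating this rotation gives a large family of endpoints $\End$ reachable by rainbow paths of the same length that use nearly the same colour set. Each endpoint again sends $\ge\vep d/2$ arcs of missing colours back into a bounded-size vertex set. I would then run a counting or expansion argument to show that the set of reachable endpoints together with their "terminal" vertices must eventually exceed $V(P)$, giving a contradiction. Here properness of the colouring controls the in-degree of each colour class at each vertex.

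The main obstacle is making rotations work in the directed setting without losing too many colours. Each rotation swaps which arc colour is used, so the missing-colour set drifts, and directedness prevents the segment reversal of P\'osa rotations. I would first try to keep rotations shallow (depth $O(1/\vep)$) with a robust set of colours that stays missing throughout. This costs $o(d)$ colours at each level. I would then show that at some bounded depth the number of distinct endpoints grows by a factor $1+\Omega(\vep)$ per level. That growth is impossible past $|V(P)|\le d$ steps of expansion, so the contradiction comes within $O_\vep(1)$ levels, and we conclude $\ell\ge(1-\vep)d$ for every fixed $\vep$ once $d$ is large.
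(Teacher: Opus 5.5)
\textbf{There is a genuine gap in the graph-theoretic part.} Your reduction to $\Cay(\Gamma,S)$ is correct and is the same as the paper's; discarding $e$ to avoid loops is sensible. But the graph theorem carries all the content, and your sketch does not prove it, because the mechanism you propose fails in digraphs.

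Take $P=(v_1,\dots,v_{\ell+1})$ and an arc $v_{\ell+1}\to v_j$ of an unused colour. Then:
\begin{itemize}
\item The path ``from the start of $P$ to the predecessor of $u$'' is just the prefix $(v_1,\dots,v_{j-1})$, which is shorter and not maximal.
\item Swapping the arc $v_{j-1}v_j$ for $v_{\ell+1}v_j$ gives a path plus a directed cycle, not a path.
\item Opening that cycle, e.g.\ as $(v_{j+1},\dots,v_{\ell+1},v_j)$, throws away the prefix.
\end{itemize}
So no family of equal-length maximal rainbow paths with new endpoints is produced, and the $(1+\Omega(\vep))$ growth of endpoints per level is asserted without any mechanism.

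Independently, your final count needs all endpoints and landing vertices to lie in a set of size $O(d)$. When $|\Gamma|\gg|S|$, i.e.\ $n\gg d$, no such set is available a priori. This is exactly the regime excluded from the directed result of \cite{BFMPY}, which needed $d=\Omega(n)$; handling it is the main point of the paper.

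You also never use $\Delta^-(G)\le d$ beyond properness. The paper's concluding example (a $d$-ary tree with added back-arcs) is properly coloured with $\delta^+\ge d$, yet its longest rainbow path from the root has length $d/2$. So the in-degree bound has to enter essentially somewhere, and your sketch does not say where.

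\textbf{The missing idea is the reduction to the dense case, Theorem~\ref{sec_1_main}.}
\begin{itemize}
\item Strong-terminal paths send at least $\vep_1 d$ arcs back into themselves. Double counting with $\Delta^-(G)\le d$ (Lemmas~\ref{short_ends} and~\ref{heavy_intersecter}) then gives a path $P$, a cover $C$ of $P$, and a set $B$ of $O(d)$ vertices that every path of $C$ meets $\Omega(d)$ times beyond $P$.
\item From this one extracts a rainbow path $P'$ and a subgraph on $O(d')$ vertices with minimum out-degree about $(1-\gamma)d'$. Every vertex of this subgraph is reachable by a short rainbow extension of $P'$.
\end{itemize}
Only in this dense setting is switching used, and it is not a single-path rotation. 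One passes to a robust out-expander (Lemma~\ref{expander_finder}) and reserves random sets $V_0,C_0$ for connections (Lemma~\ref{path_connecter}). The path-forest lemma then builds $O(\gamma^{-2})$ vertex- and colour-disjoint paths with prescribed starting vertices and total length close to $d'$. Its switching re-hangs a suffix of one path onto the end of a \emph{different} path, which, unlike rotating a single path, creates no cycle. Finally the pieces are linked through the reservoir.
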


\medskip

To prove this we translate this problem to finding large paths in regular properly edge-colored directed graphs. Specifically, consider the Cayley Graph $\Cay(\Gamma, S)$ whose vertices are elements of $\Gamma$ and where there is an $s$-colored directed edge $(g, gs)$ for each $g \in 
\Gamma$ and $s \in S$. Then for any $s_1, \cdots, s_k \in S$, the partial products $\prod_{i = 1}^j s_j$ for ($1 \leq j \leq k$) are all distinct iff there exists a path in $\Cay(\Gamma, S)$ of the form $( g s_1, g s_1 s_2, \cdots, g s_1 \cdots s_k)$.

\medskip

The problem of finding long rainbow paths in graphs is not new. Hahn \cite{Ha} conjectured in 1980 that there exists a rainbow path of length $n - 1$ in any proper-edge coloring of the complete graph $K_n$. This was disproved by Maamoun and Meyniel \cite{MM}, who showed that the Cayley graph $\Cay (\FF_2^k, \FF_2^k \setminus \{0\})$ serves as a counterexample. Andersen \cite{An} subsequently conjectured that a rainbow path of length $n - 2$ always exists. While this remains open, Alon, Pokrovskiy, and Sudakov \cite{APS} proved the existence of rainbow paths of length $n - o(n)$.

\medskip

For $d$-regular graphs, Schrijver \cite{Sc} conjectured the existence of a rainbow path of length $d - 1$ in any properly edge-colored $d$-regular graph, proving it for $d \leq 10$. Johnston, Palmer and Sarkar \cite{JPS} showed a path of length at least $2d/3$ exists. Babu, Chandraw, and Rajendraprasad \cite{BCR} proved the existence of a rainbow path of length $d - o(d)$ if the graph is $C_4$-free, and length $d - 2$ if the girth is $\Omega( \log d)$. Buci\'c, Frederickson, M\"uyesser, Pokrovsiky, and Yepremyan \cite{BFMPY}, motivated by the rearrangeable set problem, established the existence of a rainbow path of length $d - o(d)$ in any properly $d$-edge-colored $d$-regular undirected graph. For directed graphs, they showed the same $d - o(d)$ bound holds, but only under the restriction that $d = \Omega( |V(G)|)$. They posed the problem of removing this restriction.

\medskip

Our main graph-theoretic result resolves this question and as a consequence proves Theorem \ref{main_group}.
\begin{theorem} \label{main} Let $G$ be a $d$-regular properly edge colored directed graph $G$. Then $G$ contains a rainbow path of length $ d - o(d)$.
\end{theorem}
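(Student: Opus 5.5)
We prove Theorem \ref{main} by extending a longest rainbow path, in the spirit of Buci\'c, Frederickson, M\"uyesser, Pokrovskiy and Yepremyan, but replacing their density argument with one that never compares against $|V(G)|$. Fix $\vep>0$ and suppose that every rainbow path in $G$ has length less than $(1-\vep)d$. Let $P=v_0v_1\cdots v_\ell$ be a rainbow path that is maximal in a suitable potential order (for instance, first by length and then by the size of the set of its reachable rotations). Let $C(P)$ be its colour set and $M = [d]\setminus C(P)$ the missing colours, so $|M|\ge \vep d$.

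By maximality, for every missing colour $c\in M$ the out-neighbour of $v_\ell$ along colour $c$ lies on $P$. Since the colouring is proper, these $\vep d$ edges land at distinct vertices $v_{i_c}$. Each such edge gives a rotation: the path $v_0\cdots v_{i_c}\cdots$ with the segment after $v_{i_c}$ rerouted, or, in the directed setting, the path $v_{i_c+1}\cdots v_\ell v_{i_c}$ obtained by deleting the edge $v_{i_c}v_{i_c+1}$. Either way we get a new rainbow path of the same length with a new endpoint and a colour set changed by swapping one colour.

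The aim is to iterate these rotations $t=t(\vep)$ times, producing a family $\mathcal{E}$ of endpoints of rainbow paths of length $\ell$. All of these paths use essentially the vertex set $V(P)$, and each has at least $\vep d/2$ missing colours, because each rotation changes only $O(1)$ colours.

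The directed case is harder because a rotation reverses no segment: it moves a tail of the path to the front. So we track \emph{both} endpoints, using in-edges at $v_0$ and out-edges at $v_\ell$ simultaneously. The invariant we want is that the set of rotated paths has endpoints spread across many positions of $P$. Every out-edge of every endpoint in a missing colour must return into the set $V(P)$, which has size $\ell+1<d$. We then double count pairs (endpoint $u$, missing colour $c$). Each $u$ contributes at least $\vep d/2$ such pairs. On the other hand, a fixed vertex of $P$ has in-degree exactly $d$ with distinct in-colours, so it can absorb at most $d$ of them.

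This does not immediately give a contradiction, so we need an expansion statement. The tool would be a Pósa-type lemma: if $\mathcal{E}$ is the set of endpoints reachable after $j$ rotations, then after $j+1$ rotations the reachable set grows by a factor of at least $1+\vep/4$, as long as $|\mathcal{E}|\le \vep \ell/10$. Directed properly coloured Pósa rotations can collapse, because two rotations may use the same position, and this is exactly where the previous $d=\Omega(|V(G)|)$ assumption was used.

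To handle this, we would sample a random subset of colours $R\subseteq[d]$ of size $\vep d/10$ and keep these colours reserved, never using them on $P$. Working inside $G$ restricted to $[d]\setminus R$, we find the near-maximal path $P$. We then use only $R$-coloured edges for rotations, so colour clashes between rotations are controlled by concentration. The required concentration comes from degrees, which are all $d$, rather than from $|V(G)|$. This is the reason the argument avoids the size of the graph.

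Once $|\mathcal{E}|\ge \vep\ell/10$, we count the $R$-coloured out-edges leaving $\mathcal{E}$, of which there are $|\mathcal{E}|\cdot|R|$. These must all land inside $V(P)$, which receives at most $|V(P)|\cdot|R|$ of them. We then combine this with the fact that a long chain of rotation positions forces two different rotated paths to share an endpoint while having different missing colour sets. This lets us build a strictly longer rainbow path, contradicting maximality.

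The main obstacle is the expansion lemma for directed rotations, specifically preventing the loss of the rainbow property when several rotations are composed. My first attempt would be to restrict to rotations whose pivot positions are pairwise far apart along $P$ (at distance greater than $1/\vep^2$), and to use a greedy tree of rotations of depth $O(\log(1/\vep)/\vep)$. In this tree each step swaps in a colour from the reserved set $R$ that has not yet been used. Then $o(d)$ colour changes occur in total, and the final contradiction yields a rainbow path of length at least $(1-2\vep)d$.
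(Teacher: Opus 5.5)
Your argument breaks at the first step, because single-chord rotations do not exist in digraphs. Suppose $v_\ell\to v_i$ is an edge with $1\le i<\ell$. Then $E(P)\cup\{v_\ell v_i\}$ has $\ell+1$ edges on the $\ell+1$ vertices of $P$, and $v_i$ has two in-edges. Any spanning path must therefore delete either $v_\ell v_i$, which gives back $P$, or $v_{i-1}v_i$, which leaves the path $v_0\cdots v_{i-1}$ plus the directed cycle $v_i\cdots v_\ell v_i$. The object you write, $v_{i+1}\cdots v_\ell v_i$, has length $\ell-i$, not $\ell$. The prefix $v_0\cdots v_{i-1}$ cannot be reattached, because its last edge points into $v_i$, which is now the terminal vertex. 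Only $i=0$ gives a genuine rotation (a cyclic shift).

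Tracking in-edges at $v_0$ as well does not rescue this. A length-preserving switch needs a second, coordinated edge in an unused colour, for example $v_i\to v_0$ or a bypass $v_{i-1}\to v_{i+1}$. Nothing in a $d$-regular proper colouring guarantees such pairs exist. So there is no family $\mathcal{E}$ of endpoints of length-$\ell$ paths, and the P\'osa-type expansion lemma has nothing to act on. You also leave that lemma unproved (``the tool would be'', ``my first attempt would be''), and it is the whole content of the argument.

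The endgame would not close even if rotations were available. Every endpoint in $\mathcal{E}$ lies in $V(P)$, so the count $|\mathcal{E}|\cdot|R|\le|V(P)|\cdot|R|$ holds trivially and gives no contradiction. Two rainbow paths of the same length that share an endpoint but miss different colour sets do not combine into a longer rainbow path. Two smaller problems: reserving $R$ conflicts with the maximality you invoke, since if $P$ is maximal only in $G$ restricted to colours outside $R$, the $R$-coloured out-edges of $v_\ell$ are not forced into $V(P)$. You also assume exactly $d$ colours, which the theorem does not.

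What is genuinely missing is a way to localize the problem to $O(d)$ vertices when $n\gg d$. The paper does this without rotations. Assuming no rainbow path of length $(1-\varepsilon)d$, it uses terminal and strong-terminal paths, covers, and a double count exploiting $\Delta^-(G)\le d$. This produces a set $B$ of $O(d)$ vertices that every relevant extension hits $\Omega(d)$ times. From $B$ it extracts a prefix path $P$ and a subgraph $G'$ on $O(d')$ vertices with $\delta^+(G')\ge(1-\gamma)d'$, every vertex reachable by a bounded-length extension of $P$. Only then does it apply the dense-case tools: a robust out-expander subgraph, a random vertex/colour reservoir for connections, and a switching lemma giving $O(\gamma^{-2})$ disjoint rainbow paths of total length $(1-\gamma)d'$ starting at high in-degree vertices. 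Your proposal has no substitute for this reduction.
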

\end{section}
\begin{section}{Notation and Terminology}
Directed graphs, for our purposes, are loopless. For a directed graph $G$ and vertex $v \in V(G)$, we let $N^+(v)$ and $N^-(v)$ denote the out neighborhood of $v$ and the in neighborhood of $v$ respectively. Similarly for a subset $S \subseteq V(G)$, we let $N^+(S) = \bigcup_{v \in S} N^+(S)$ and $N^-(S) = \bigcup_{v \in S} N^-(S)$.

\medskip

For a graph $H$, we let $V(H)$ denote the vertex set of $H$, $E(H)$ denote the edge set of $H$, and for a collection $\mathcal H$ of graphs, we let $V(\mathcal H) = \bigcup_{H \in \mathcal H} V(H)$.

\medskip

If $H$ is an edge-colored graph and $e \in E(H)$ we let $C(e)$ denote the color of $e$. We let $C(H) = \bigcup_{e \in E(H)} C(e)$ and if $\mathcal H$ is a collection of edge-colored graphs, we let $C(\mathcal H) = \bigcup_{H \in \mathcal H} C(H)$. For an edge-colored graph $H$, a vertex set $V_0 \subseteq V(H)$, and a color set $C_0 \subseteq C(H)$, we let $H[V_0, C_0]$ denote the subgraph of $H$ with vertex set $V_0$ and edge set
\[
\{ (u, v) \in E(H) : u, v \in V_0, C(u, v) \in C_0 \}
\]
For a graph $G$ and vertex $v \in V(G)$, we let $\deg^+_G(v)$ and $\deg^-_G(v)$ denote the out and in degrees of $v$ respectively. We let $\delta^+(G)$ be the minimum out-degree among vertices in $G$ and $\Delta^-(G)$ be the maximum in-degree among vertices in $G$.

\medskip

We say $\alpha \ll \beta$ to mean $\alpha$ is sufficiently small compared to $\beta$, the exact dependance can be extracted from whatever argument follows.

\medskip

For a path $P$ we let $\End(P)$ denote the endpoint of $P$. We say that a path $Q$ is a prefix of $P = (v_1, \cdots, v_m)$ if $Q = (v_1, \cdots, v_k)$ for some $k \leq m$ and that $P$ extends $Q$ if $Q$ is a prefix of $P$.
\end{section}
\begin{section}{Proof Overview} \label{proof_overview}
In this Section we state the main Lemmas and use them to prove Theorem \ref{main}.

\medskip

Our main contribution is essentially reducing Theorem \ref{main} to the case where $d = \Omega(n)$. In particular we prove
\begin{theorem} \label{sec_1_main} If $G$ is an $n$-vertex properly-colored $d$-regular directed graph with no rainbow path of length $(1 - \vep)d$, then for every $\gamma > 0$ there exists a $K > 0$, a rainbow path $P$ and an induced subgraph $G' \subseteq G [ V(G) \setminus V(P), C(G) \setminus C(P) ]$ such that if $d' = d - |E(P)|$,
\begin{itemize}
	\item For every $u \in V(G')$, there exists a rainbow path extending $P$ by length at most $K$ that ends at $u$.
	\item $|V(G')| \leq K d'$
	\item $\delta^+(G') \geq (1 - \gamma) d'$
\end{itemize}
\end{theorem}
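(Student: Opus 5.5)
Our plan is an iterative "expansion or concentration" argument run on a maximal-type rainbow path. Fix $\gamma$ and choose constants $\vep \ll \gamma$ (we may assume $\vep$ is small relative to $\gamma$, since a rainbow path of length $(1-\vep)d$ for smaller $\vep$ is harder to avoid) and a step length $L$ with $1/L \ll \gamma$. Start with $P_0$ the trivial path at a vertex. At stage $i$ we have a rainbow path $P_i$ with $d_i = d - |E(P_i)| \ge \vep d$; otherwise we would already have a rainbow path of length $(1-\vep)d$. Let $R_i$ be the set of endpoints of rainbow paths extending $P_i$ by at most $L$ edges that avoid $V(P_i)$ and $C(P_i)$, and let $H_i = G[V(G)\setminus V(P_i), C(G)\setminus C(P_i)]$. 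In $H_i$ every vertex off $P_i$ loses at most $|E(P_i)|$ out-edges to used colours and at most $|V(P_i)|$ out-edges to vertices of $P_i$, so $\deg^+_{H_i}(v) \ge d - 2|E(P_i)| - 1$. This bound is too weak on its own. To handle it we will keep the invariant that vertices in the region we care about have few out-neighbours on $P_i$, which is exactly what the first bullet's locality buys us. We therefore track the degree into the set we are building, rather than the raw degree.

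The dichotomy at each stage is as follows. Consider the sequence $R_i^{(1)} \subseteq R_i^{(2)} \subseteq \cdots$ of endpoints reachable from $\End(P_i)$ by rainbow extensions of length at most $j$. If for some $j \le L$ we have $|R^{(j+1)}| \le (1+\gamma/4)|R^{(j)}|$, we say the set stabilises. If it never stabilises, $|R^{(L)}| \ge (1+\gamma/4)^L$, and we argue this is large compared to $d_i$. A key subtlety is that a vertex reachable by one extension need not have its out-neighbours reachable, because colours are used along the extension. The standard fix, as in the undirected argument of Buci\'c et al., is a greedy or random selection: a random short extension uses only $O(L)$ colours, so a typical vertex in $R^{(j)}$ retains all but $O(L)$ of its $H_i$-out-edges for continuation. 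When the set stabilises, we set $G'$ to be the subgraph induced on $R^{(j)}$ minus a small set of bad vertices, and $K = L$. Stabilisation then says that most out-edges of most vertices of $R^{(j)}$ stay inside $R^{(j+1)}$, which has size at most $(1+\gamma/4)|R^{(j)}|$. After iteratively deleting vertices with out-degree below $(1-\gamma)d'$, we obtain $\delta^+(G') \ge (1-\gamma)d'$ with $d' = d_i$. The bound $|V(G')| \le Kd'$ must come from the assumption that no long rainbow path exists: if the reachable set were larger than $Kd'$, we would use it to extend $P_i$ by a long rainbow segment, obtaining $P_{i+1}$ and iterating.

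The progress step is the main obstacle. When $R$ is huge (more than $Kd'$ vertices), we need to extend $P_i$ by at least $\Omega(L)$, or by a constant fraction of $d_i$, while keeping the structure. We first try an averaging argument. Among the $\gg d'$ reachable endpoints, each endpoint has in-degree at most $d$, and each colour appears on at most one edge into each vertex. Hence the extensions cannot all be blocked by the same few colours, and we can pick an extension of length exactly $L$, giving $P_{i+1}$ with $d_{i+1} = d_i - L$. Since $d_i$ decreases by $L$ each time and stays above $\vep d$, the process terminates after at most $d/L$ steps in the stabilising case. The delicate part is showing that the expansion case genuinely forces a length-$L$ extension rather than many short ones, and that the minimum-degree conclusion survives the loss of degree to $V(P_i)$. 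For the latter we would choose $P_i$ to minimise the number of edges into $V(P_i)$ from the current reachable set, or use the regularity of $G$ to bound the number of edges into $V(P_i)$ by $d|V(P_i)|$, which is small relative to $|R|\cdot d'$ when $|R| \ge Kd'$ with $K \gg 1/\vep$.
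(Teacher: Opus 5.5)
\textbf{There is a genuine gap.} Your outline never turns the hypothesis (no rainbow path of length $(1-\vep)d$) into usable structure, and the two steps where you defer to it do not go through.

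First, the claim that locality keeps reachable vertices from having many out-neighbours on $P_i$ is false. In a graph with no long rainbow path, short extensions get stuck precisely because their endpoints send almost all of their $d_i$ available out-edges back into $V(P_i)$. Your fallback, that at most $d|V(P_i)|\le d^2$ edges enter $V(P_i)$, controls back-edges only on average over a set of size $\gg d/\vep$. It says nothing in the regime $|R|\le Kd'$, where you need a minimum-degree statement. In the regime $|R|>Kd'$ it lets a typical vertex take one more step, not $L$ consecutive steps. The obstruction is repeated vertex-blocking by $V(P_i)$, not colour-blocking, so your colour argument does not address it. The paper handles this by working only with paths that are not weak-terminal and do not extend a terminal path. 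Every such path has at least $d-|E(P)|-2\vep_1 d$ children.

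Second, stabilisation yields neither remaining conclusion.
\begin{itemize}
\item \emph{Size bound.} The condition $|R^{(j+1)}|\le(1+\gamma/4)|R^{(j)}|$ concerns relative growth only and is compatible with $|R^{(j)}|=\Theta(n)$, $n\gg d$. So $K=L$ gives only the first bullet, and ``large $R$ forces a length-$L$ extension'' is exactly what remains unproved. You could repair this with a layering argument: vertices almost all of whose out-neighbours lie in earlier layers, seeded at $V(P_i)$, with layer sizes controlled by $\Delta^-(G)\le d$. But then $K$ is exponential in $L$.
\item \emph{Minimum degree.} With $K$ exponential in $L$, pigeonholing over $R^{(1)}\subseteq\cdots\subseteq R^{(L)}$ cannot produce a layer of size $o(d')$. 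Your stabilisation only bounds edges leaving $R^{(j)}$ on average, and that does not survive iterated deletion in a digraph. Deleting one vertex can lower the out-degree of up to $d\ge d'$ in-neighbours, so the cascade need not stop.
\end{itemize}

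The missing idea is Lemmas~\ref{short_ends} and~\ref{heavy_intersecter}. Via strong-terminal paths, a cover, and double counting with $\Delta^-(G)\le d$, they give:
\begin{itemize}
\item a sink set $B$ with $|B|\le K_2d$, where $K_2$ is independent of the step length $T$;
\item a path $P'$ all of whose length-$T$ extensions meet $B$ at least $\vep_4T/2$ times.
\end{itemize}
The size bound then comes from the layers $S_i$ seeded at $B$. A good path reaching a vertex outside $S'_T$ could be extended greedily to a length-$T$ extension of $P'$ with no further hits of $B$, which is a contradiction.

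The minimum degree comes from pigeonholing over the nested sets $B_1\subseteq B_2\subseteq\cdots\subseteq B$, of which there are about $\vep_4T/2$. The vertex set $V(H_{i_0})$ is closed under taking good children. So each vertex loses only out-neighbours lying in the single removed layer $B_{i_0}\setminus B_{i_0-1}$. That layer has size at most $4K_2d/(\vep_4T)$, which is small in absolute terms precisely because $K_2$ does not depend on $T$. This decoupling of the pigeonholed set's size from the step length is what your scheme lacks.
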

From here we use ideas introduced in \cite{BFMPY} to find a rainbow path of length close to $d'$ in $G'$ and connect it to $P$ to form a path of length at least $(1 - \vep) d$. To do this we first use the following Lemma to find a large induced subgraph $G''$ of $G'$ with high min degree and good expansion properties.
\begin{definition} Let $G$ be a directed graph on $n$ vertices. For $U \subseteq V(G)$ and $\nu > 0$, we define the $\nu$ robust out neighborhood of $U$ to be
\[
RN^{+}_{\nu, G}(U) = \{ v \in V(G) : |n^{-}(v) \cap U| \geq \nu n \}
\]
$G$ is a robust $(\nu, \tau)$-out-expander if every $U \subseteq V(G)$ with $\tau n \leq |U| \leq (1 - \tau) n$ satisfies
\[
|RN^{+}_{\nu, G} (U) \setminus U | \geq \nu n
\]
\end{definition}
\begin{lemma} \label{expander_finder} For any $\delta, \tau, \alpha$, there exists $\nu$ such that for every positive integer $n$ and every $n$-vertex digraph $G$ with $\delta^{+}(G) \geq \alpha n$, there exists a non-empty induced subgraph $G'$ of $G$ satisfying:
\begin{enumerate}
\item[\textbf{A1}] $G'$ is a robust $(\nu, \tau)$-out-expander.
\item[\textbf{A2}] Every vertex $v \in V(G')$ satisfies $\deg^{+}_{G'}(v) \geq \deg^+_{G}(v) - \delta n$
\end{enumerate}
\end{lemma}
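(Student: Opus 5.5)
We prove Lemma \ref{expander_finder} by the standard iterative deletion argument. Start with $G_0 = G$. As long as the current induced subgraph $G_i$ on $n_i$ vertices is not a robust $(\nu_i,\tau)$-out-expander, we split it along a witnessing set and pass to a smaller induced subgraph. The parameter $\nu_i$ will come from a decreasing sequence $\nu_0 \gg \nu_1 \gg \cdots \gg \nu_T$, where $T$ depends only on $\delta,\tau,\alpha$. We will show the process stops after at most $T$ steps, and take $\nu = \nu_T$. Since robust $(\nu_i,\tau)$-expansion implies robust $(\nu_T,\tau)$-expansion when $\nu_T \le \nu_i$ (for the normalisation by the current vertex count; see below), this single $\nu$ suffices.

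\textbf{The splitting step.} Suppose $G_i$ is not a robust $(\nu_i,\tau)$-out-expander. Then there is $U$ with $\tau n_i \le |U| \le (1-\tau)n_i$ and $|RN^+_{\nu_i}(U)\setminus U| < \nu_i n_i$. Let $W = V(G_i)\setminus U$, and let $W' \subseteq W$ be the vertices of $W$ with fewer than $\nu_i n_i$ in-neighbours in $U$, so $|W\setminus W'| < \nu_i n_i$. The number of edges from $U$ to $W'$ is less than $\nu_i n_i^2$. The number of edges from $U$ to $W\setminus W'$ is at most $\nu_i n_i |U|$. Hence the number of edges leaving $U$ is at most $2\nu_i n_i^2$.

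We take $G_{i+1} = G_i[U']$, where $U'$ is obtained from $U$ by deleting the vertices that send more than $\sqrt{\nu_i}\, n_i$ edges out of $U$. By counting, at most $2\sqrt{\nu_i}\, n_i$ vertices are deleted. Every surviving vertex loses at most $\sqrt{\nu_i}\, n_i + 2\sqrt{\nu_i}\, n_i$ out-edges relative to $G_i$. The first term accounts for edges leaving $U$, the second for edges to the deleted vertices. Moreover $n_{i+1} \le (1-\tau)n_i$.

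\textbf{Termination and degree bookkeeping.} Each vertex of $G_{i+1}$ satisfies $\deg^+_{G_{i+1}}(v) \ge \deg^+_{G}(v) - 3\sum_{j\le i}\sqrt{\nu_j}\, n$. Choosing the $\nu_j$ so that $3\sum_j \sqrt{\nu_j} \le \delta$ (with $\delta \le \alpha/2$ without loss of generality) gives $\delta^+(G_i) \ge \alpha n - \delta n \ge \alpha n/2$ throughout. In particular $G_i$ is nonempty and $n_i \ge \alpha n/2$. On the other hand $n_i \le (1-\tau)^i n$, so the process stops after at most $T = \lceil \log(2/\alpha)/\tau\rceil$ steps. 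This yields A2 for the final graph.

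\textbf{Normalisation.} One subtlety is that the robust expansion is measured relative to the current vertex count $n_i$, not $n$. This causes no harm: the final graph has at least $\alpha n/2$ vertices, and the definition is applied intrinsically to $G'$. The point needing most care is exactly this bookkeeping. Because $n_i$ shrinks, the thresholds $\nu_i n_i$ shrink as well. We therefore must bound the degree losses in terms of $n$ via $n_i \le n$, which the argument above does, and we must use the fixed finite sequence $\nu_i$ chosen in advance as functions of $\delta,\tau,\alpha$. With that choice everything is uniform in $n$, and $\nu = \nu_T$ works.
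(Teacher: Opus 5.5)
Your argument is correct, and it is organised differently from the paper's. The paper builds a nested chain $V(G)=U_0\supseteq U_1\supseteq\cdots\supseteq U_t$. Each $U_{i+1}$ is a constant fraction of $U_i$ with small edge boundary $\partial^+U_{i+1}\le\gamma_{i+1}n^2$, and the chain stops when no such sparse cut exists. Only then does the paper clean up, once, by deleting the set $Y_t$ of vertices with many out-neighbours outside $U_t$. Robust expansion of $G[U_t\setminus Y_t]$ is then extracted from ``every mid-sized $W$ has large edge boundary'' by an averaging count. Because the cleanup comes after the stopping condition has been certified, the paper needs the increasing hierarchy $\gamma_0\ll\gamma_1\ll\cdots\ll\gamma_t$, so that $|Y_t|$ (controlled by the earlier thresholds) is negligible against the last one.

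You instead use robust expansion itself as the stopping rule. You turn its failure directly into a sparse cut with at most $2\nu_i n_i^2$ crossing edges, and you clean after every split. So the graph you output is exactly the one on which expansion was checked, with all parameters normalised by its own order $n_i$. This removes the edge-to-robust conversion and any real need for a hierarchy. A constant sequence $\nu_i=\nu$ with $3(T+1)\sqrt{\nu}\le\delta$ already works, giving an explicit $\nu$ polynomial in $\delta$, $\tau$ and $1/\log(2/\alpha)$. The paper's route only gives iterated $\ll$-dependences.

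One small slip: you deduce that $G_i$ is nonempty from $\delta^+(G_i)\ge\alpha n/2$. A minimum-degree bound is vacuous for the empty graph, and an empty $G_{i+1}$ would be a vacuous robust expander at which your process stops. Also require $2\sqrt{\nu_0}<\tau$, for instance by assuming $\delta\le\tau$ as well. Then $|U'|\ge|U|-2\sqrt{\nu_i}\,n_i\ge(\tau-2\sqrt{\nu_i})n_i>0$, and the induction and the termination bound go through as written.
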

Working in $G''$ and using the next Lemma, we are able to set aside a small set of vertices and colors capable of connecting a small number of (vertex and color) disjoint rainbow paths.
\begin{lemma} \label{path_connecter} Let $\nu, \tau, \alpha, p, 1/M \leq 1$ be positive constants satisfying $\nu + \tau \leq \alpha$. Then for all sufficiently large $n$, the following holds: Let $G$ be a properly edge-colored directed graph on $n$ vertices. Suppose that $G$ is a robust $(\nu, \tau)$-out-expander with $\delta^+(G) \geq \alpha n$. Let $V_0, C_0$ be independent $p$-random subsets of $V(G), C(G)$, respectively. Then with high probability $1 - o_n(1)$ the following all hold
\begin{enumerate}
\item[ \textbf{B1}] $|V_0| \leq 2pn$
\item[ \textbf{B2}] Every vertex $v \in V(G)$ satisfies
\[
\deg^+ (v, V \setminus V_0 ; C \setminus C_0 ) \geq (1 - 3p) \deg^+(v)
\]
\[
\deg^{+} (v, V_0, C_0) \geq  p^2 \deg^{+}(v)/3
\]

\item[ \textbf{B3}] For any two distinct vertices $u, v \in V(G)$ with $\deg^{-}(v) \geq  (\tau + \nu) n$ and for any vertex subset $V_1 \subseteq V_0$ and color subset $C_1 \subseteq C_0$ each of size at most $M$, there exists a rainbow directed path of length at most $\nu^{-1} + 1$ from $u$ to $v$ in $G$ whose internal vertices are in $V_0 \setminus V_1$ and whose colours are in $C_0 \setminus C_1$.
\end{enumerate}
\end{lemma}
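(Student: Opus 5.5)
We prove Lemma \ref{path_connecter} by checking each of the three properties separately with high probability and then taking a union bound. For \textbf{B1}, $|V_0|$ is a sum of $n$ independent Bernoulli$(p)$ variables, so Chernoff gives $|V_0| \le 2pn$ with probability $1-e^{-\Omega(n)}$.

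For \textbf{B2}, fix $v$. The out-edges of $v$ have distinct endpoints, and, since the colouring is proper, distinct colours. So each out-edge $(v,w)$ survives into $V\setminus V_0, C\setminus C_0$ independently with probability $(1-p)^2 \ge 1-2p$, and lands in $V_0,C_0$ independently with probability $p^2$. We have $\deg^+(v)\ge \alpha n$. Chernoff therefore gives counts at least $(1-3p)\deg^+(v)$ and at least $p^2\deg^+(v)/3$, each failing with probability $e^{-\Omega(n)}$. A union bound over the $n$ vertices finishes \textbf{B2}. Here we use that $v\notin$ the relevant neighbourhood, since there are no loops.

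\textbf{B3} is the main step, and we handle it by robust expansion plus random-set concentration. First, a deterministic statement in $G$. Fix $u$, write $X_0=\{u\}$, and let $X_{i+1}$ be the set of vertices $w$ having at least $\nu n/2$ (say) in-neighbours in $X_i$. Use $\delta^+\ge\alpha n$ for the first step, giving $|X_1|\ge \alpha n\ge \tau n$ roughly. After that, robust expansion applied to $X_i\cup$ (already reached) gives growth by $\nu n$ per step until the size exceeds $(1-\tau)n$. This takes at most $\nu^{-1}$ steps. Then $\deg^-(v)\ge(\tau+\nu)n$ forces $v$ to have at least $\nu n$ in-neighbours in the reached set.

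To make this rainbow and inside $V_0\setminus V_1$ with colours in $C_0\setminus C_1$, we build the reached sets inside the random sets directly and take "reached" to mean reachable by a rainbow path with a given bounded structure. Concretely, define $R_i$ as the set of vertices reachable from $u$ by rainbow paths of length $\le i$ whose internal vertices lie in $V_0\setminus V_1$ and colours in $C_0\setminus C_1$.

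The key robustness claim, which holds w.h.p.\ simultaneously for all sets $U$ of a given form, is that every vertex $w$ with $|N^-(w)\cap U|\ge \nu n$ has $\Omega(p^2\nu n)$ in-edges from $U\cap V_0$ with colours in $C_0$. We cannot union bound over all $U$. Instead we use the standard trick: the in-neighbourhood of each fixed $w$ intersected with $V_0$, and restricted to colours in $C_0$, is a $p^2$-fraction up to error. More precisely, we apply Chernoff per pair (vertex $w$, vertex set defined by an iterative deterministic process). To avoid dependency issues, we run the expansion deterministically in $G$ (sets $X_i$ depend only on $G$ and $u$), and require only that each $w\in X_{i+1}$ has $\ge p^2\nu n/4$ in-edges from $X_i\cap V_0$ with colour in $C_0$. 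This is a statement about $O(n^2)$ fixed events, so Chernoff and a union bound apply.

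Finally, we build a rainbow path greedily backwards from $v$. At each step we pick an in-neighbour in $X_{i}\cap V_0$ via a $C_0$-coloured edge. This avoids the $\le M$ forbidden vertices of $V_1$, the $\le M$ forbidden colours of $C_1$, and the $O(\nu^{-1})$ vertices and colours already used. This is possible since $p^2\nu n/4 \gg M+\nu^{-1}$ for large $n$, using properness (each colour is excluded for at most one in-edge of a given vertex). We stop on reaching $X_1$, then connect to $u$ through a $C_0$-coloured out-edge of $u$.

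The main obstacle is making the path land exactly at $u$ while keeping it rainbow and vertex-distinct. To handle this, we would set $X_1$ to be the out-neighbours of $u$ via $C_0$-edges inside $V_0$, which w.h.p.\ have size $\ge p^2\alpha n/3$ by \textbf{B2}. We then run the expansion inside the random structure from there, which changes the size thresholds by constant factors. The hypothesis $\nu+\tau\le\alpha$ is what makes these thresholds consistent.
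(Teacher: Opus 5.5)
Your skeleton matches the paper's: deterministic expansion from $N^+(u)$ via robust expansion, Chernoff plus a union bound over polynomially many fixed (pair, level) events, and a greedy backward construction from $v$. \textbf{B1} and \textbf{B2} are fine. The final step, attaching $u$, has a genuine gap.

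When you choose the first internal vertex $z_1$, two \emph{new} colours enter at once, $C(u,z_1)$ and $C(z_1,z_2)$, and they must differ. Your count (``each colour is excluded for at most one in-edge'') only controls colours already on the path or in $C_1$. Properness of a directed colouring does not stop an in-edge and an out-edge at $z_1$ from sharing a colour, and this can happen for every candidate $z_1$. Take $\Cay(\mathbb{Z}_4\times\mathbb{F}_2^k,\{1\}\times\mathbb{F}_2^k)$. It is a blow-up of a directed $4$-cycle, and it is a robust $(\nu,\tau)$-out-expander with $\alpha=1/4$ whenever $\nu\le\tau/4$ and $\nu+\tau\le 1/4$. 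Every $2$-path from $u$ to $v=u+(2,0)$ is monochromatic. Your procedure puts $v$ at level $2$, since all its in-neighbours lie in $N^+(u)$, so it tries exactly such a $2$-path and gets stuck.

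The missing idea is the paper's. Set $C_{u,w}=\{x: C(u,x)=C(x,w)\}$. By properness each $x$ lies in $C_{u,w}$ for at most one $w$, so $H_u=\{w:|C_{u,w}|\ge\sqrt n\}$ has size at most $\sqrt n$. Symmetrically, $F_v=\{w: v\in H_w\}$ has size at most $\sqrt n$. The paper then does three things:
\begin{enumerate}
\item It picks intermediate vertices outside $H_u$.
\item It requires the candidate sets to have size exceeding $\sqrt n+O(M+\nu^{-1})$.
\item It handles the case $v\in H_u$ by first stepping from $u$ to some $w\in V_0\setminus(V_1\cup F_v)$ with $C(u,w)\in C_0\setminus C_1$, and then running the argument from $w$.
\end{enumerate}

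Your proposed repair does not address this, and it creates new problems. If $X_1$ is the random set of $C_0$-coloured out-neighbours of $u$ inside $V_0$, then:
\begin{enumerate}
\item All later $X_i$ depend on $(V_0,C_0)$, so the union bound over fixed events you set up earlier no longer applies.
\item $|X_1|\approx p^2|N^+(u)|$ may be smaller than $\tau n$. Nothing relates $p$ to $\tau$, and $\nu+\tau\le\alpha$ does not help, so robust expansion cannot even be started from $X_1$.
\end{enumerate}
Even with this $X_1$, the coincidence $C(u,z_1)=C(z_1,z_2)$ in the example above is untouched.

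The clean way to land at $u$ is to keep the expansion deterministic from $N^+(u)$. Then, for each pair $(u,z_2)$ where $z_2$ has at least $\nu n/2$ in-neighbours in $N^+(u)$, impose as a fixed event that many $z_1\in N^+(u)\cap N^-(z_2)\cap V_0$ have both $C(u,z_1)$ and $C(z_1,z_2)$ in $C_0$. This is the paper's $Y_{u,z_2,0}$. It is amenable to Chernoff; the only dependence is that a colour can occur once as some $C(u,\cdot)$ and once as some $C(\cdot,z_2)$. Together with $z_2\notin H_u$, it leaves room for all the exclusions.

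A minor point: define your levels cumulatively, $X_{i+1}=RN^+_{\nu}(X_1\cup\dots\cup X_i)$, and step back into the union. As written, robust expansion of the union does not give in-neighbours in $X_i$ alone.
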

We then use the following Lemma to find a constant number of disjoint rainbow paths starting at high in-degree vertices with total length at least $(1 - \vep) d'$ and connect them using Lemma \ref{path_connecter}.
\begin{lemma} \label{path_forest} Let $\vep > 0$ and $G$ be an $n$-vertex properly colored directed graph  with $\delta^+(G) \geq d$ where $1/d \ll \vep$. Then for every $\vep > 0$ and vertices $V = \{v_0, \cdots, v_{4/\vep^2} \}$, there exists a family of vertex-disjoint, color-disjoint paths $\mathcal P = \{ P_0, \cdots, P_{4/\vep^2} \}$ such that the total length of all paths is at least $(1 - \vep) d$ and the first vertex in $P_i$ is $v_i$ for all $i$.
\end{lemma}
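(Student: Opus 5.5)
We prove Lemma \ref{path_forest} by a greedy extension argument with a potential, in the spirit of the undirected argument of \cite{BFMPY}. Set $k = 4/\vep^2$. We maintain a family $\mathcal P = \{P_0, \cdots, P_k\}$ of vertex-disjoint, colour-disjoint rainbow paths with $P_i$ starting at $v_i$; initially each $P_i = (v_i)$. (If the $v_i$ are not distinct we merge the repeated ones, which only helps.) Let $L = \sum_i |E(P_i)|$, and suppose $L < (1-\vep)d$. We want to show that some path can be lengthened, possibly after restructuring.

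\textbf{Step 1: direct extension.} Consider the endpoint $u_i = \End(P_i)$. It has at least $d$ out-edges, with distinct colours because the colouring is proper. An out-edge is \emph{blocked} if its head lies in $V(\mathcal P)$ or its colour lies in $C(\mathcal P)$. Both sets have size about $L + k$, so at most $2L + 2k$ out-edges of $u_i$ are blocked. This crude count only guarantees an extension while $L < d/2$. To reach $(1-\vep)d$ we need an amortised count over all $k+1$ endpoints simultaneously.

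\textbf{Step 2: rotation/cutting.} Suppose no endpoint has a free out-edge. Then every one of the $k+1$ endpoints has at least $d$ out-edges, each going to a used vertex or using a used colour. Only $|C(\mathcal P)| = L$ colours are used. By properness, each used colour appears at most once among the out-edges of a fixed vertex, so at most $(k+1)L$ of the endpoints' out-edges use a used colour. Hence at least $(k+1)(d - L) \ge (k+1)\vep d$ out-edges from endpoints go to vertices of $V(\mathcal P)$ and have unused colours.

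Pick such an edge $u_i \to w$, where $w$ is an interior vertex of some $P_j$ (possibly $j = i$). Apply the cut operation: truncate $P_j$ just before $w$, and attach $w$ together with the remainder of $P_j$ to $P_i$ via the new edge. When $j \ne i$, the path $P_i$ now continues into the tail of $P_j$, which is vertex-disjoint from $P_i$. This move deletes one edge of $P_j$ (the one entering $w$) and adds the edge $u_i w$, so $L$ is unchanged, but the colour of the deleted edge is freed and the endpoints change. When $j = i$ the move is a cycle and is not allowed as stated; it would instead have to be handled as a P\'osa-type rotation, and in this directed setting the rotated segment reverses direction, so it is simply not usable.

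Since there are many such edges relative to $L$, averaging shows that some cut frees a colour or vertex that opens a free edge at a new endpoint. Every cut keeps $L$ and creates a fresh endpoint. Because the number of paths $k+1 = 4/\vep^2 + 1$ is large, the total slack $(k+1)\vep d$ exceeds $L \cdot (\text{bounded})$. A potential argument should then show that within a bounded number of cuts some endpoint acquires a free edge, which strictly increases $L$.

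\textbf{Main obstacle.} The hard step is making the cut/rotation potential argument rigorous in the directed setting, where reversing segments is unavailable. What I would try first is a cleaner alternative. Partition the colours randomly into $k+1$ classes $C^{(i)}$ and let each $P_i$ grow using only colours in $C^{(i)}$, so colour-disjointness is automatic. Then argue by a counting/double-count over endpoints that a greedy longest-path choice at each stage loses only $O(d/\sqrt{k}) = O(\vep d)$ in total. The choice $k \approx 4/\vep^2$ is presumably exactly what makes such an $\vep$-loss bound work via Cauchy--Schwarz.
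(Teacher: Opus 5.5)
\textbf{There is a genuine gap: the proposal does not prove the lemma.} The decisive step is deferred to ``a potential argument should then show'', and you acknowledge it is not rigorous. Two things you have are right: a stuck endpoint has more than $\vep d$ out-edges of unused colours into $V(\mathcal P)$, and your single cut move is valid. But that move keeps $L$ fixed and only trades one used colour for another. You give no quantity that it increases, and nothing stops a sequence of such moves from cycling (two paths can keep exchanging tails).

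The colour-partition fallback points the wrong way. Fixing a class $C^{(i)}$ for each $P_i$ forbids the one mechanism that makes the many-path statement easier than the single-path one: a colour freed on one path being reused at a different path's endpoint. Within its class, the endpoint of $P_i$ is only guaranteed about $d/(k+1)$ out-edges, while vertex conflicts come from the whole family (up to $d$ vertices). So no local count gets near $(1-\vep)d$, and the claimed $O(d/\sqrt{k})$ loss has no derivation.

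\textbf{The missing idea} is to chain your cuts into an augmenting sequence organised in levels.
\begin{enumerate}
\item Take $\mathcal P$ of maximum total length $L<(1-\vep)d$. The $2/\vep+1$ shortest of the $4/\vep^2+1$ paths then have total length at most about $\vep d/2$. Let $h_0,\dots,h_{2/\vep}$ be their endpoints, and call the other paths long.
\item Let $C_0$ be the unused colours, and $N_i$ the out-neighbours of $h_i$ along edges with colours in $C_i$. Let $C_{i+1}$ be $C_i$ together with the colours of the long-path edges entering vertices of $N_i$, ignoring vertices of short paths and start vertices.
\item Maximality forces $N_i\subseteq V(\mathcal P)$. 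Suppose $h_i\to x$ with $x\notin V(\mathcal P)$ has colour $c_0\in C_i\setminus C_0$. Then $c_0$ lies on a long-path edge $(u_1,v_1)$ first added at some level $j_1\le i$, so $v_1\in N_{j_1-1}$. Delete $(u_1,v_1)$, add $(h_{j_1-1},v_1)$, whose colour lies in $C_{j_1-1}$, and repeat.
\item The levels strictly decrease, so the chain ends with a colour of $C_0$. Each $h_j$ receives at most one new out-edge. Every new edge goes from a short-path endpoint into a long path, so no cycle closes. The total length therefore goes up by one, contradicting maximality.
\end{enumerate}
Your cut is the length-one case of this chain. What forces progress is letting the new edge carry a \emph{used} colour whose current edge is rerouted at a lower level.

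The count then runs as follows. $h_i$ has more than $\vep d+|C_i\setminus C_0|$ out-edges with colours in $C_i$, all landing in $V(\mathcal P)$. Discard the at most $\vep d/2+O(\vep^{-2})$ short-path and start vertices. Each remaining vertex contributes the distinct colour of its unique in-edge on a long path, so
$|C_{i+1}\setminus C_0|\ge|C_i\setminus C_0|+\vep d/2-O(\vep^{-2})$.
After $2/\vep$ levels this exceeds $L$, a contradiction.

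This is where $4/\vep^2=(2/\vep)^2$ comes from. You need $2/\vep$ levels, each with its own endpoint, on paths short enough that excluding all of them costs only $\vep d/2$ per level. It does not come from Cauchy--Schwarz or from averaging slack over all endpoints at once.
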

Putting this all together gives us a rainbow path of length at least $(1 - \vep) d$, proving Theorem \ref{main}. The full details are in Section \ref{main_proof}.
\end{section}
\begin{section}{Finding a dense subgraph}
In this Section we prove Theorem \ref{sec_1_main}. For the remainder of this Section we fix $\vep > 0$ and, with $d$ sufficiently large, a $d$-regular properly edge-colored directed graph $G$ and suppose that $G$ contains no path of length $(1 - \vep)d$. We will also use ``path'' to mean ``directed rainbow path'' and assume all paths start at the same fixed vertex $v$ unless stated otherwise.

\medskip

We also fix a constant $\vep_1 \ll \vep$.
\begin{definition} We say a path $P$ is terminal if $|N^+(\End(P)) \cap V(P)| \geq \vep_1 d$ and this condition does not hold for any prefix of $P$.
\end{definition}
Since we assume that Theorem \ref{main} is false, if we start with the empty path and keep extending it, we eventually reach a terminal path since otherwise as long as $|P| \leq (1 - \vep_1) d$, we could find a $u \in N^{+}(\End(P))$ with the color of $(\End(P), u)$ not in $C(P)$ and $u \not \in V(P)$ and so we could extend $P$.
\begin{definition} For a path $P$, a path $P'$ is a \textit {child} of $P$ if it extends $P$ by length $1$. $P'$ is the \textit{parent} of $P$ if $P$ is a child of $P'$.
\end{definition}
\begin{definition} We say a terminal path $P$ is a strong-terminal path if at least $\vep_1 d$ children of the parent of $P$ are also terminal and a weak-terminal path otherwise. .
\end{definition}
We will ignore weak terminal paths By this we mean that we will again narrow our definition of path to only include paths that are not weak-terminal paths and do not extend any terminal path by length at least 1. Therefore any non-terminal path $P$ has at least $d - |E(P)| - 2 \vep_1 d$ children: To see this observe there are $d - |E(P)|$ out-edges of $\End(P)$ that are not a color in $C(P)$, there are at most $\vep_1 d$ out-neighbors of $\End(P)$ that are in $V(P)$ and there are at most $\vep_1 d$ out-neighbors $u$ of $\End(P)$ for which $P \cup \{u \}$ is a weak terminal path.

\medskip

We let $\Ter$ denote the set of strong-terminal paths.

\medskip

Let us describe a very broad overview of the strategy in this Section supposing that $|\Ter| = O(d)$. For every parent $P$ of a strong-terminal path, each of its $\Omega(d)$ children that are terminal has out-degree $O(d)$ into $P$. Therefore $\Ter$ has $\Omega(d^2)$ out-degree into every strong-terminal path. Since the max in-degree is $d$, each strong-terminal path has $\Omega(d)$ vertices with $\Omega(d)$ in neighbors of $\Ter$ which we will call \textit{sink vertices}. But since $\Ter$ only sends $O(d^2)$ edges out, there are at most $O(d)$ \textit{sink vertices} in total. So there exists a set vertices $U$ of size $O(d)$  such that each strong-terminal path intersects $U$ $\Omega(d)$ times.

\medskip

From this one can deduce that there exists a path $P$ such that every constant length extension of $P$ intersects $U$ many times. None of these extensions can get too far away from $U$ until they intersect $U$ the requisite number of times and we are able to extract a subgraph from these extensions that satisifes Theorem \ref{sec_1_main}.

\medskip

In general $|\Ter| = O(d)$ is a very strong condition that we cannot expect in general to hold, however as described in the following Definition and Lemma, we can replace parents of strong-terminal paths with a specific cover $C$, and the vertices at the ends of strong-terminal paths with sets $ \{ U_{P' } \}_{P' \in C}$.
\begin{definition} For a path $P$, a collection $C$ of paths extending $P$ is a cover of $P$ if every strong terminal path extending $P$ is an extension of some path in $C$.
\end{definition}
For a path $P$ and a path $P'$ extending $P$, we let $(P' - P)$ denote the suffix of $P'$ starting at $\End(P)$.
\begin{lemma} \label{short_ends} There exists constants $1/K, \vep_2, \vep_3 \ll \vep_1$, a path $P$, a cover $C$ of $P$ and for every $P' \in C$, a set $U_{P'} \subseteq V(G)$ such that
\begin{enumerate}
\item[\textbf{D1}] $| \bigcup_{P' \in C} U_{P'} | \leq Kd$
\item[\textbf{D2}] $ |U_{P'} | \geq \vep_3 d$ for every $P' \in C$.
\item[\textbf{D3}] For every $P' \in C$ and $u \in U_{P'}$
\[ | N^{+} (u) \cap V(P' - P) | \geq \vep_2 d
\]
\end{enumerate}
\end{lemma}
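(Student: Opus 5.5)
We obtain $P$, $C$ and the sets $U_{P'}$ by an iterative ``zooming in'' procedure that makes the heuristic from the overview rigorous. Throughout we use the standing setup of this Section: every non-terminal path $Q$ has at least $d-|E(Q)|-2\vep_1 d$ children. Every strong-terminal path $T$ with parent $Q$ has at least $\vep_1 d$ terminal siblings, and each of these has at least $\vep_1 d$ out-neighbours in $V(Q)$. For a path $P$ we take as the natural candidate cover the set $C_P$ of parents of strong-terminal paths extending $P$, or more economically a suitable antichain of ancestors of them. For $P'\in C$ we let $U_{P'}$ be the set of vertices $u$ with $|N^+(u)\cap V(P'-P)|\ge \vep_2 d$. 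This makes \textbf{D3} automatic, so the work is in \textbf{D1} and \textbf{D2}.

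For \textbf{D2} we double count. Fix $P'\in C$ that is the parent of strong-terminal children. The endpoints of its $\ge\vep_1 d$ terminal children are distinct vertices, each sending $\ge \vep_1 d$ edges into $V(P')$, giving $\ge\vep_1^2 d^2$ edges in total. We want most of these edges to land in the suffix $V(P'-P)$ rather than in $V(P)$. Since in-degrees are at most $d$, if at least $\vep_1^2d^2/2$ edges land in $V(P'-P)$, then at least $\vep_1^2 d/4$ endpoints each send $\ge \vep_1^2 d/4$ edges there. Setting $\vep_2=\vep_1^2/4$ then gives $|U_{P'}|\ge \vep_3 d$ with $\vep_3=\vep_1^2/4$.

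To force edges into the suffix we choose $P$ carefully via a potential argument. Start with $P$ the trivial path. If many strong-terminal extensions of $P$ have terminal siblings whose endpoints send most of their back-edges into $V(P)$, these endpoints have $\Omega(d)$ out-neighbours in the fixed set $V(P)$ of size $<d$. We then extend $P$ by one step, choosing the child for which this ``bad'' behaviour is most concentrated. Because $|V(P)|\le (1-\vep)d$ bounds the number of steps, and each step should increase some bounded potential (for instance the number of edges from the relevant endpoint sets into $V(P)$, normalised by $d^2$), the process stops after $O(1/\vep_1^{O(1)})$ steps. At that point the good case holds for every element of a cover.

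For \textbf{D1} we bound $|\bigcup U_{P'}|$ by edge counting. Every $u\in\bigcup U_{P'}$ has $\ge\vep_2 d$ out-neighbours inside suffixes $P'-P$. If we can arrange that all suffixes in $C$ lie in a vertex set $W$ of size $O(d)$, then $W$ receives at most $d|W|$ edges, so $|\bigcup U_{P'}|\le d|W|/(\vep_2 d)=O(d)$. To get this we make the cover consist of short extensions, of length at most a constant $L$. We take $C$ to be all extensions of $P$ of length exactly $L$ that still have strong-terminal descendants, together with the strong-terminal parents reached earlier.

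The main obstacle is that such covers can have size $d^{L}$, so $W$ is not obviously small. Here we would redefine $U_{P'}$ using only the last few vertices and argue that the bad event, where the sets are spread over more than $Kd$ vertices, would let us build a rainbow path of length $(1-\vep)d$ contradicting the hypothesis. This would mirror the $|\Ter|=O(d)$ heuristic through a weighted count in which each path in $C$ is weighted by $d^{-|P'-P|}$, the natural probability measure on extensions, so that the total weight is $O(1)$. We expect this weighting and the choice of $K$ to be the delicate part.
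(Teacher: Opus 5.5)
You have a genuine gap at \textbf{D1}, which you leave as the ``delicate part'', and the route you sketch for it cannot work. If every $P'\in C$ extends $P$ by at most a constant $L$, then $|V(P'-P)|\le L+1<\vep_2 d$. So no vertex can satisfy \textbf{D3}, $U_{P'}$ is empty, and \textbf{D2} fails. Redefining $U_{P'}$ via ``the last few vertices'' only makes this worse. In fact \textbf{D3} forces every suffix $P'-P$ to have at least $\vep_2 d$ vertices, so the cover must consist of long extensions. A set $W$ of size $O(d)$ containing all suffixes is therefore not something you can arrange up front; in the paper such a set $B$ is a \emph{consequence} of this lemma (Lemma~\ref{heavy_intersecter}). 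Taking $U_{P'}$ to be \emph{all} vertices with $\vep_2 d$ out-neighbours in the suffix also discards the only handle on \textbf{D1}.

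Your potential argument aims at the wrong difficulty. For $P=\{v\}$ no back-edges can land in $V(P)$. Taking $C$ to be the parents of strong-terminal paths and $U_{P'}$ the endpoints of their terminal children already satisfies \textbf{D2} and \textbf{D3}. What fails is \textbf{D1}, since there may be far more than $Kd$ such endpoints. Pushing $P$ deeper is what \textbf{D1} needs, and only then do back-edges into $V(P)$ threaten \textbf{D3}. Also, one-vertex extensions only give an $O(d)$ bound on the number of steps, and you never identify a potential with constant increments.

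The paper resolves this tension with a heavy/light iteration in which $U_{P'}$ is always chosen among strong-terminal endpoints. Start from a cover $S_{i-1}$ of $\{v\}$ with endpoint sets $V_{i-1,P'}$; initially these are the endpoints of the terminal children. Set $A(Q)=\bigcup\{V_{i-1,P'}: P'\in S_{i-1} \text{ extends } Q\}$ and call $Q$ heavy if $|A(Q)|\ge Kd$. For any non-heavy $P$, \textbf{D1} is free, because $\bigcup U_{P'}\subseteq A(P)$.

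If $\{v\}$ is non-heavy, you stop. Otherwise let $S_i$ be the minimal non-heavy paths, and for $P\in S_i$ call $u\in A(P)$ bad if $|N^+(u)\cap V(P)|\ge \vep_1 d-i\vep_2 d$.
\begin{itemize}
\item If some $P\in S_i$ has fewer than $\vep_1 d/2^i$ bad vertices, take $C=\{P'\in S_{i-1}: P' \text{ extends } P\}$ and let the good vertices form the $U_{P'}$. By the inherited bound $|N^+(u)\cap V(P')|\ge\vep_1 d-(i-1)\vep_2 d$, each good vertex has at least $\vep_2 d$ out-neighbours in $V(P'-P)$, so this is the required triple.
\item Otherwise the bad vertices become $V_{i,P}$, and you re-root at $S_i$.
\end{itemize}

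Termination is where $\Delta^-(G)\le d$ is used. Suppose a heavy $Q$ had length at least $d-i\vep_1 d/2$. Each $P'\in S_{i-1}$ extending $Q$ adds at most about $\vep_1 d/2$ vertices, so each of the $Kd$ vertices of $A(Q)$ sends more than $\vep_1 d/4$ edges into $V(Q)$. But $V(Q)$ receives at most $d^2$ edges, which is impossible for $K\gg 1/\vep_1$. Hence paths in $S_i$ have length at most $d-i\vep_1 d/2$, there are at most $2/\vep_1$ rounds, and $\vep_3=\vep_1 2^{-2/\vep_1}$.

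This re-rooting step and the length-drop argument are what your sketch is missing.
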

\begin{proof} We attempt to define a sequence of covers $S_0, S_1, \cdots, S_{1/2\vep + 1}$ of the singleton path $\{v\}$, and families of sets of vertices, $ \{V_{0, P}\}_{P \in S_0}, \cdots, \{V_{1/2\vep + 1, P} \}_{P \in S_{1/2 \vep + 1}}$ where $ V_{i, P} \subseteq \Ter(P)$. We define these sets inductively as follows: We let $S'_0$ be the set of parents of strong terminal paths, we let $S_0$ be a minimal subcover of $S'_0$ and we let $V_{0, P}$ be the endpoints of strong terminal paths that are children of $P$. We maintain that
\begin{enumerate}
\item[$\textbf{C1}_i$] Every path in $S_i$ has length at most $d - i \cdot \vep_1 d /2 $
\item[$\textbf{C2}_i$] For every $P \in S_i$, $| V_{i, P} | \geq \vep_1 d/2^i$
\item[$\textbf{C3}_i$] For every $P \in S_i$, and $u \in V_{i, P}$, $|N^+(u) \cap P| \geq \vep_1 d - i \cdot \vep_2 d$.
\end{enumerate}
By the definition of strong terminal path, $(S_0, \{V_{0, P} \}_{P \in S_0})$ satisfies $\textbf{C1}_0$, $\textbf{C2}_0$, $\textbf{C3}_0$.
We show that if at any point, we cannot find a pair $(S_i, \{V_{i, P}\}_{P \in S_i})$ that satisfies the above requirements then we can find a triple $(P, C, \{U_{P'} \}_{P' \in C})$ that satisfies \textbf{D1}, \textbf{D2}, \textbf{D3}. Note this must happen for some $i \leq 2/\vep_1$ since otherwise $\textbf{C1}_i$ forces the length of paths in $S_i$ to be negative.

\medskip

For a path $P$ and $u \in \Ter(P)$, say that $u$ is $i$-bad for $P$ if $| N^+(u) \cap V(P)| \geq \vep_1 d - \vep_2 i \cdot d$ and $i$-good if it is not $i$-bad.

\medskip

We let
\[
A(P) = \bigcup_{P' \in S_{i - 1}: P' \text{ extends } P } V_{i - 1, P'}
\]
and we say a path is heavy if $|A(P)| \geq K d$.

\medskip

If the singleton path $\{v\}$ is not-heavy then the triple $(\{v\}, S_{i - 1}, \{V_{i - 1, P} \}_{P \in S_{i - 1}} )$ satisfies \textbf{D1}, \textbf{D2}, \textbf{D3} for $\vep_3 = \vep_1 /2^i \geq \vep_1 / 2^{2/\vep_1}$ as long as $\vep_2 < \vep_1 - 2 \vep_2/\vep_1 $. In this case we would be done so suppose that $\{v \}$ is heavy.

\medskip

We can now define $S_i$ and $\{V_{i, P} \}_{P \in S_i}$. We set $S_i$ to be the set of paths which are not heavy but their parent is heavy and we set $V_{i, P}$ to be the set of elements in $ A(P) $ that are $i$-bad for $P$.

\medskip

It remains to check $\textbf{C1}_i$, $\textbf{C2}_i$, $\textbf{C3}_i$. $\textbf{C3}_i$ holds by definition of $i$-bad paths.

\bigskip

We turn our attention to verifying $\textbf{C1}_i$. Suppose towards contradiction that $\textbf{C1}_i$ does not hold. Then there is a heavy path of length at least $d - i \cdot \vep_1 d /2 - 1$. Since each path in $S_{i - 1}$ has length at most $\vep d - (i - 1) \cdot \vep_1 d /2 d$, by $\textbf{C3}_{i - 1}$ each element in $A(P)$ has at least
\[
\vep_1 d - i \cdot \vep_2 d - \vep_1 d/2 - 1 > \vep_1 d/ 4
\]
out-neighbors neighbors in $V(P)$. But since $P$ has length at most $d$ and each vertex has indegree at most $d$ we have have
\[
(\vep_1 d/4) (K d) < d^2
\]
which is a contradiction since $1/K \ll \vep_1$. This verifies $\textbf{C1}_i$.

\bigskip

All that remains is to verify $\textbf{C2}_i$.
\begin{claim} If $\textbf{C2}_i$ fails, then there exists a triple $(P, C, \{U_{P}\}_{P \in C})$ satisfying \textbf{D1}, \textbf{D2}, \textbf{D3}.
\end{claim}
\begin{proof} Suppose that $\textbf{C2}_i$ fails. Then there are at most $\vep_1 d / 2^i$ vertices in $A(P)$ that are $i$-bad for $P$. Let $C$ be the set of paths in $S_{i - 1}$ that extend $P$ and for $P'' \in C$ define $U_{P'}$ to be the set of vertices in $V_{i - 1, P'}$ are $i$-good for $P$.
We claim that the triple
\[
(P, C, \{U_{P'} \}_{P' \in C} )
\]
satisfies the \textbf{D1}, \textbf{D2}, \textbf{D3}. Since $S_{i - 1}$ is a cover of the empty path, $C$ is a cover of $P$.

\medskip

\textbf{D1} holds since $A(P) < Kd$.

\medskip

Let $P' \in C$. Then by $\textbf{C2}_{i-1}$,
\[
|V_{i - 1, P'}| \geq \vep_1 d/2^{i - 1}
\]
so if we let $b$ be the number of paths in $V_{i - 1, P'}$ that are i-bad for $P$,
\[
|U_{i, P'}| \geq \vep_1 d/2^{i - 1} - b \geq \vep_1 d/2^i \geq \vep_1 d / 2^{2/\vep_1}
\]
So \textbf{D2} holds with $\vep_3 = \vep_1 2^{-2/\vep_1}$.

\medskip

By $\textbf{C3}_{i -1}$, for every $P' \in C$, $u \in U_{ P'}$,
\[
|N^+(u) \cap V(P'') | \geq \vep_1 d - (i - 1) \vep_2 d
\]
And since $u$ is $i$-good for $P$,
\[
|N^+(u) \cap V(P'' - P') | \geq \vep_2 d
\]
so \textbf{D3} is verified. This proves the claim
\end{proof}
Thus we may assume that $\textbf{C2}_i$ holds.

\medskip

All that remains to be verified is that $S_i$ is indeed a cover of the singleton path $\{v\}$. Observe that by the same argument we used to verify $\textbf{C2}_i$, no path in $S_{i - 1}$ is heavy. Thus, if $T$ is a strong-terminal path, since $S_{i - 1}$ is a cover of $\{v\}$ there is a path $P' \in S_{i - 1}$ which $T$ extends. Since $\{v\}$ is heavy and $P'$ is not heavy, there is a minimal prefix $P$ of $P'$ that is not heavy. By definition $P \in S_i$ and $T$ extends $P$ so $S_i$ is a cover of $\{v\}$.
\end{proof}
\begin{definition} For a path $P$ and a cover $C$ of $P$ say a path $P'$ is in $(P, C)$ if $P' = P'' - P$ for some $P'' \in C$.
\end{definition}
\begin{lemma} \label{heavy_intersecter} Let $(P, C, \{U'\}_{P' \in C})$ be a triple satisfying Lemma \ref{short_ends}. There exists a constants $\vep_4, K_2 > 0$ and a vertex set $B$ with $|B| \leq K_2 d$ such that every path in $(P, C)$ intersects $B$ at least $\vep_4 d$ times.
\end{lemma}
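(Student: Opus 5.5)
The plan is to make rigorous the ``sink vertex'' heuristic from the proof overview, with $U := \bigcup_{P' \in C} U_{P'}$ playing the role of the endpoints of strong-terminal paths. By \textbf{D1}, $|U| \leq Kd$. Every vertex of $U$ has out-degree at most $d$, so the total number of edges leaving $U$ is at most $Kd^2$. We will take $B$ to be the set of vertices receiving many edges from $U$:
\[
B = \{ w \in V(G) : |N^-(w) \cap U| \geq \vep_5 d \}
\]
for a constant $\vep_5 \ll \vep_2\vep_3$. Double counting the edges out of $U$ gives $|B| \cdot \vep_5 d \leq Kd^2$, so $|B| \leq (K/\vep_5) d =: K_2 d$.

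Next we show that every path in $(P,C)$ meets $B$ many times. Fix $P'' \in C$ and set $Q = P'' - P$. By \textbf{D2} and \textbf{D3}, the set $U_{P''}$ has at least $\vep_3 d$ vertices, and each of them sends at least $\vep_2 d$ edges into $V(Q)$. This gives at least $\vep_2\vep_3 d^2$ edges from $U$ into $V(Q)$.

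We split these edges according to whether their head lies in $B$ or not. Since $|Q| \leq d+1$, the vertices of $V(Q)\setminus B$ absorb at most $(d+1)\vep_5 d \leq 2\vep_5 d^2$ of these edges. Hence at least $(\vep_2\vep_3 - 2\vep_5) d^2 \geq \vep_2\vep_3 d^2/2$ of them land in $V(Q) \cap B$. Each vertex of $G$ has in-degree at most $d$, so
\[
|V(Q) \cap B| \geq \frac{\vep_2\vep_3 d^2/2}{d} = \vep_2 \vep_3 d/2.
\]
Setting $\vep_4 = \vep_2\vep_3/2$ then gives the lemma.

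The only delicate point is the choice of $\vep_5$. It must be small enough compared with $\vep_2\vep_3$ that the vertices outside $B$ cannot absorb the edge mass from a single $U_{P''}$. On the other hand, $K_2 = K/\vep_5$ must remain a constant, which it does because $K$, $\vep_2$ and $\vep_3$ are fixed constants from Lemma \ref{short_ends}. We also need the length bound $|Q| \leq d$, which holds because all paths have fewer than $d$ edges; this is guaranteed since $G$ is $d$-regular and the paths are rainbow.
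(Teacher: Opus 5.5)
Your proof is correct and is essentially the paper's argument: the paper also takes $B$ to be the vertices with many in-neighbours in $U=\bigcup_{P'\in C}U_{P'}$, and bounds $|B|$ by counting the out-edges of $U$. It likewise splits the at least $\vep_2\vep_3 d^2$ edges from $U_{P''}$ into $V(P''-P)$ according to whether their heads lie in $B$, using $\Delta^-(G)\le d$; the only difference is that its threshold is $\vep_2\vep_3 d/2$ rather than your $\vep_5 d$.

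One justification needs fixing. The bound $|V(Q)|\le d+1$ does not follow from $d$-regularity and rainbowness alone, since a proper colouring may use more than $d$ colours. It follows instead from the section's standing assumption that $G$ has no rainbow path of length $(1-\vep)d$, which is what the paper invokes when it says every path has length at most $d$.
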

\begin{proof} Let
\[
U = \bigcup_{P' \in C} U'_{P'}
\]
we let, $N_{(P, C)}(v) = N^{-}(v) \cap U$
\[
B = \{v \in V(G) : | N_{(P, C)}(v) | \geq \vep_2 \vep_3 d/2 \}
\]
Since every vertex has outdegree at most $d$, $|B| \leq 2 |U|/(\vep_2 \vep_2) \leq 2K d/(\vep_2 \vep_3) $.
\claim For every path $P'$ in $(P, C)$,
\[
\sum_{v \in P'} | N_{(P, C)} (v) | \geq \vep_3 \vep_2 d^2
\]
\proof For every $P'$ in $(P, C)$ and every $u \in U'_{P'}$,
\[
|N^{+}(u) \cap V(P')| \geq \vep_2 d
\]
Additionally, $|U'_{P'}| \geq \vep_3 d$. Putting these two facts together proves the claim.
Then since every path has length at most $d$ and every vertex has in-degree at most $d$, for every $P'$ in $(P, C)$,
\[
d \vep_2 \vep_3 d /2 + d |V(P') \cap B| \geq \vep_2 \vep_3 d^2
\]
so $|V(P') \cap B| \geq \vep_2 \vep_3 d/2 $ so we can take $\vep_4 = \vep_2 \vep_3/2$.
\end{proof}
\begin{lemma} For every constant $T > 0$, there exists a path $P'$ extending $P$ such that for every path $P''$ extending $P'$ by length $T$
\begin{itemize}
\item $P'' - P'$ intersects $B$ at least $\vep_4 T/2 $ times.
\item No prefix of $P''$ is in $C$.
\end{itemize}
\end{lemma}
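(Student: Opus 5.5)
We argue by a density-increment (iteration) argument on prefixes of paths in the cover, using Lemma \ref{heavy_intersecter}. The quantity we track is how many times a path intersects $B$. Suppose the statement fails for some $T$. We will build a sequence of extensions $P = Q_0, Q_1, Q_2, \dots$. Each $Q_{j+1}$ extends $Q_j$ by length at most $T$. Either no prefix of $Q_{j+1}$ lies in $C$ and $Q_{j+1}-Q_j$ meets $B$ fewer than $\vep_4 T/2$ times, or $Q_{j+1}$ has a prefix in $C$. Since the statement fails for $Q_j$, there is an extension $P''$ of $Q_j$ by length $T$ violating one of the two bullets. If the second bullet fails, we obtain a member of $C$ as a prefix, and this is the stopping condition. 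If only the first fails, we set $Q_{j+1}=P''$.

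The first step is to make sure the process reaches a path in $C$ rather than dying. Since $C$ is a cover of $P$, it suffices to argue that we can always steer the extensions toward some strong-terminal path. The remark after the definition of strong-terminal paths helps here: every non-terminal path has at least $d-|E(P)|-2\vep_1 d$ children, all of which are again admissible paths. So the process can continue until we reach a terminal path. By the convention that we ignore weak-terminal paths and extensions of terminal paths, that terminal path is strong, hence it extends some element of $C$. Thus at some stage $m$ we reach $Q_m$ having a prefix $P^*\in C$.

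The second step is the counting. Before hitting $P^*$, each block $Q_{j+1}-Q_j$ of length $T$ contributes fewer than $\vep_4T/2$ intersections with $B$. The final partial block, of length at most $T$, contributes at most $T$. Hence
\[
|V(P^*-P)\cap B| < \frac{\vep_4}{2}\,|E(P^*-P)| + T \le \frac{\vep_4}{2} d + T .
\]
On the other hand, $P^*-P$ is a path in $(P,C)$, so Lemma \ref{heavy_intersecter} gives $|V(P^*-P)\cap B|\ge \vep_4 d$. For $d$ large compared to $T/\vep_4$, these two bounds contradict each other. So for the right choice of $Q_j$ the statement holds with $P'=Q_j$.

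The main obstacle is the first step. A violating extension $P''$ of the first kind need not be extendable toward a strong-terminal path, so we must ensure the chain can be continued until it hits $C$. I would handle this by choosing $Q_{j+1}$ to be a non-terminal violating extension whenever possible. If every violating extension contains a prefix in $C$, we stop immediately with the counting above. If instead the violating extension is terminal without passing through $C$, we must rule this out using the cover property, and this is where the convention on weak-terminal paths is essential.
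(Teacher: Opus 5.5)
Your proposal is correct and follows the paper's proof: you repeatedly extend by length-$T$ blocks that meet $B$ fewer than $\vep_4 T/2$ times until a member $P^*$ of $C$ appears, then contradict Lemma~\ref{heavy_intersecter} via $|V(P^*-P)\cap B| < \vep_4 d/2 + T + 1 < \vep_4 d$. The ``main obstacle'' you flag is not actually one, so your first step is unnecessary: the failure hypothesis already supplies a violating length-$T$ extension at every stage and all paths have length less than $d$, so the process stops within $d/T$ stages and can only stop by producing a prefix in $C$ (a terminal violating extension is strong-terminal by the section's convention, so the cover property gives it a prefix in $C$).
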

\begin{proof} Suppose towards contradiction this is not true. Then we can start at $P' = P$ and iteratively extend $P'$ by
\begin{itemize}
\item If there is a path $P'' \in C$ extending $P'$ by length at most $T$, set $P' = P''$ and terminate.
\item Otherwise we extend $P'$ by length $T$ while intersecting $B$ at most $\vep 4 T/2$ additional times.
\end{itemize}
Eventually we reach a path in $C$ and have intersected $B$ at most least $(\vep_4 T/2) (d/T) + T < \vep_4 d$ times which is false by Lemma \ref{heavy_intersecter}.
\end{proof}
We now have all the ingredients to prove Theorem \ref{sec_1_main}. We prove it in a slightly different form which we state as the following Lemma.
\begin{lemma} Let $P'$ be as in the previous Lemma, let $d' = d - |E(P')|$. For every constant $\gamma > 0$, as long as $T$ is a sufficiently large and $\vep_1 \ll \gamma$, there exists a constant $K_3 > 0$ and a subgraph $G'' \subseteq G[V(G) \setminus V(P'), C(G) \setminus C(P')]$ such that
\begin{itemize}
\item For every $u \in G''$, there exists a path of length at most $T$ in $G'$ from $\End(P')$ to $u$
\item $|V(G'')| \leq K_3 d'$
\item $\delta^{+}(G'') \geq (1 - \gamma - 3\vep_1/\vep) d' $
\end{itemize}
(Note that since we assume Theorem \ref{main} is false, $d' \geq \vep d$ and since $\vep_1 \ll \vep$, the loss of $3 \vep_1/\vep$ is not significant).
\end{lemma}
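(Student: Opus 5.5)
We would build $G''$ from the set of all short extensions of $P'$. Let $\mathcal E$ be the set of paths (in the restricted sense of this Section) extending $P'$ by length at most $T$. Let $W=\{\End(Q): Q\in\mathcal E\}\setminus V(P')$. Every $u\in W$ is then reachable from $\End(P')$ by a rainbow path of length at most $T$ that avoids $V(P')$ and $C(P')$, which gives the first bullet. The candidate is $G''=G[W', C(G)\setminus C(P')]$ for a suitable $W'\subseteq W$ obtained by pruning.

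\textbf{Size bound.} The previous Lemma says every length-$T$ extension $P''$ of $P'$ satisfies $|V(P''-P')\cap B|\ge \vep_4T/2$, and no prefix of $P''$ lies in $C$. We would use this to show that only $O(d)$ vertices can be reached. Fix $Q\in\mathcal E$ of length $t<T$. Every length-$T$ extension of $P'$ through $Q$ must still meet $B$ often. Since each non-terminal path has at least $d'-3\vep_1 d$ children, the branching is large. We would do a counting/averaging argument: among the children of a path, the fraction whose endpoint lies in $B$ must be bounded below along typical branches, since otherwise a greedy choice of children avoiding $B$ produces an extension with fewer than $\vep_4T/2$ hits. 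Concretely, call a reachable path \emph{$B$-avoiding} if at most a $\vep_4/4$ fraction of its children end in $B$. A greedy walk through $B$-avoiding paths gains few $B$-hits. Hence, for $T$ large, within any $T/2$ consecutive steps there must be a path most of whose children land in $B$, so its endpoint has $\Omega(d')$ out-neighbours in $B$. We would conclude that every $u\in W$ reaches, within $T$ steps, a vertex with $\Omega(d)$ out-edges into $B$. To turn this into $|W|=O(d')$ we want vertices reachable in bounded steps from $\End(P')$ while staying ``close'' to $B$. We then restrict $W'$ to vertices with $\ge \vep_4 d/8$ out-neighbours in $B$, together with the layers feeding them, and bound each layer by double counting in-degrees: at most $|B|d/(\vep_4 d/8)=O(K_2 d)$ vertices send $\vep_4 d/8$ edges into $B$. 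Iterating $T$ times with constant losses gives $|W'|\le K_3 d\le K_3 d'/\vep$.

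\textbf{Degree condition.} For $u=\End(Q)$ with $Q\in\mathcal E$ non-terminal and short, $u$ has at least $d-|E(Q)|-2\vep_1 d\ge d'-T-2\vep_1 d$ out-neighbours forming children of $Q$. All of these are in $W$ if $|E(Q)-E(P')|<T$. The only problematic vertices are those reached only at depth exactly $T$, and those whose out-edges go outside $W'$ after pruning. The colours of $Q-P'$ cost at most $T=o(d')$ edges. Edges into $V(P')$ cost at most $\vep_1 d$, since paths in $\mathcal E$ are non-terminal (no prefix in $C$, hence no strong-terminal prefix is reached). This gives the $3\vep_1/\vep$ loss after dividing by $d'\ge\vep d$. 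To handle the depth boundary and the pruning, we would iterate: repeatedly delete vertices of out-degree below $(1-\gamma-3\vep_1/\vep)d'$ inside the current set. We would argue that deletions cannot exhaust the set, since deleted vertices each lose $\gamma d'$ edges to outside. Also, for $T$ large there is a depth $t\le T$ at which layers stop growing by more than a $(1+\gamma)$ factor (pigeonhole over $T$ layers with total size $O(d)$ and each layer of size $\ge d/2$). Taking $W'$ to be the union of layers up to such a stable depth makes most out-edges stay inside.

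\textbf{Main obstacle.} The hardest step is the size bound: deducing $|W'|=O(d')$ from the ``many $B$-hits'' property, because the paths are required to be rainbow, so reachability is path-dependent rather than vertex-dependent. I would first try the greedy-avoidance argument above, combined with the pigeonhole over layers to select the stable depth.
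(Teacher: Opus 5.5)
There is a genuine gap, and it starts with the set you track. The endpoint set $W$ of \emph{all} extensions of $P'$ of length at most $T$ need not have size $O(d')$. The previous lemma only forces $\vep_4T/2$ hits in $B$ over the whole $T$ steps, so a path can collect them early and then roam. Your greedy-avoidance contradiction is only available from a path that still has fewer than $\vep_4T/2$ hits.

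The paper therefore works only with \emph{good} paths: extensions of $P'$ meeting $B$ at most $\vep_4T/2-1$ times, which automatically have length less than $T$. It first checks that a good path with at most $\vep_4T/2-2$ hits has at least $d'-2\vep_1d-T$ good children. The size bound is then a containment statement rather than a pruning. Set $S_0=B$, and let $S_i$ be the vertices all but $2\vep_1d+T$ of whose out-neighbours (in the colour-restricted graph) lie in $S'_{i-1}=S_0\cup\dots\cup S_{i-1}$. In-degree double counting gives $|S_i|\le(2/\vep)|S'_{i-1}|$. A good path reaching some $w\notin S'_T$ could be continued for $T$ further steps through vertices $w_j\notin S'_{T-j}\supseteq B$, contradicting the previous lemma. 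So every good-reachable vertex lies in $S'_T$.

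Your threshold layers would serve equally well, but only through this contrapositive. Your statement that every $u$ \emph{reaches} a vertex with many out-edges into $B$ is existential and places $u$ in no layer. Intersecting $W$ with the layers then throws away exactly the closure you need for the degree bound.

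The degree step is also unsupported. Deleting low out-degree vertices in a digraph need not end in a nonempty set. Removing one vertex can lower the out-degree of up to $d$ in-neighbours, so no potential decreases. The reachable set here is naturally layered, so a cascade is a real danger. Pigeonholing over depth would need the whole depth-$T$ reachable set to be $O(d)$, which fails for all extensions. Restricted to good paths, it breaks at paths carrying the maximal permitted number of hits, whose children in $B$ are not good and can be numerous.

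The missing idea is to pigeonhole over the hit count rather than depth. Let $H_i$ be the good paths with at most $i$ hits that end in $B$, and let $B_i=V(H_i)\cap B$. Since $B_1\subseteq B_2\subseteq\dots\subseteq B$ and $|B|\le K_2d$, some $i_0$ has $|B_{i_0}\setminus B_{i_0-1}|\le 4K_2d/(T\vep_4)$. Then $G''$ is taken induced on $V(H_{i_0})\setminus(B_{i_0}\setminus B_{i_0-1})$. Each vertex there ends a good path with fewer than $i_0$ hits. Each good child of that path can be continued until it first meets $B$, so its endpoint lies in $V(H_{i_0})$. The only out-neighbours lost are the few in $B_{i_0}\setminus B_{i_0-1}$.
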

\begin{proof}
Call a path $Q$ in $G$ good if $Q$ intersects $B$ at most $\vep_4 T/2 - 1$ times and $P' \cup Q$ is a path in $G$.
\claim If $(w_1, \cdots, w_k)$ is a good path and $(w_1, \cdots, w_k)$ intersects $B$ at most $\vep_4T/2 - 2$ times, then there are at least $d' - 2\vep_1 d - T$ out-neighbors $u$ of $w_k$ such that $(w_1, \cdots, w_k, u)$ is a good path.
\proof Let us count the number of out-neigbors $u$ of $w_k$ in $G$ for which $P' \cup \{w_1, \cdots, w_k, u\}$ is not a path in $G$. There are at most $|E(P')| + k < |E(P')| + T$ such $u$ for which $C(w_k, u) \in C ( P' \cup \{w_1, \cdots, w_k\})$. There are at most $\vep_1 d$ such $u$ for which $P' \cup \{w_1, \cdots, w_k, u\}$ is a weak-terminal path. And there are at most $\vep_1 d$ such $u$ for which $u \in V( P' \cup \{w_1, \cdots, w_k \} )$. All other out-neighbors of $w_k$ extend to a path. Thus there are at most $|E(P')| + 2 \vep_1 d + T = (d - d') + 2 \vep_1 d + T$ out-neighbors $u$ in $G$ for which $P' \cup (w_1, \cdots, w_k, u)$ is not a path in $G$. Since $G$ has min degree at least $d$, this proves the claim.

\bigskip

Let $\mathrm{Good}$ be the set of good paths. We now describe how to construct $G''$. Let
\[
H_i = \{ Q \in \good : |V(Q) \cap B| \leq i, \End(Q) \in B \}
\]
and let
\[
B_i = V(H_i) \cap B
\]
Since $B$ has size at most $K_2 d$ and $B_1 \subseteq B_2 \subseteq \cdots \subseteq B_{\vep_4 T/2 - 1} \subseteq B$, there is some index $2 \leq i_0 < \vep_4 T/2 - 1$ such that
\[
|B_{i_0} \setminus B_{i_0 - 1} | \leq \gamma ' d
\]
where $\gamma' = 4 K_2/(T \vep_4)$. Let $B'_{i_0} = B_{i_0} \setminus B_{i_0 - 1}$.

\medskip

We claim we can take $G''$ to be the induced subgraph of $G[V(G) \setminus V(P') , C(G) \setminus C(P')]$ on vertex set $V(H_{i_0}) \setminus B'_{i_0}$.

\medskip

First we show $\delta^+(G'') \geq (1 - \gamma - 3 \vep_1/\vep) d'$. Let $u \in V(H_{i_0}) \setminus B'_{i_0}$. Then there exists a good path $Q$ with $\End(Q) = u$ and $|V(Q) \cap B| < i_0$. By the above claim, $Q$ has a least $d' - 2 \vep_1 - T$ good children. Since $|V(Q) \cap B| < i_0$ each of these good children extends to a path in $H_i$. So if $v$ is the endpoint of a good child of $Q$ and $v \not \in B'_{i_0}$ then $v \in V(H_{i_0}) \setminus B'_{i_0}$ and $(u, v) \in E(G'')$. So
\[
\deg^{+}_{G''}(u) \geq d' - T - 2 \vep_1 d - |B_{i_0} \setminus B_{i_0 - 1}| \geq d' - \gamma' d - 3 \vep_1 d
\]
Thus we can take $\gamma = 2 \gamma'/\vep$.

\medskip

It remains to be shown that there exists a $K_3$ such that $|V(G'')| \leq K_3 d'$. Let $G' = G[V(G), C \setminus C(P)]$. Set $S_0 = B$ and for $i \geq 1$ let $S_i$ be the set of all vertices $w \not \in S_{i - 1}$ such that, in $G'$, all but at most $2 \vep_1 d+ T$ out-neighbors of $w$ are in $\bigcup_{j = 0}^{i - 1} S_{j}$. Let $S'_i = \bigcup_{j = 0}^i S_i$.
\begin{claim} $ |S_i| \leq (2/\vep) |S'_{i-1}|$
\end{claim}
\proof We have
\[
d |S'_{i - 1}| \geq \partial^-_{G'} (S'_{i - 1}) \geq (d' - 2 \vep_1 d - T) |S_i|
\]
which immediately implies the claim as $T$ is constant, $d' \geq \vep d$ and $\vep_1 \ll \vep$.

\medskip

\claim Every good path in $G'$ starting at $\End(P')$ is contained in $S'_T$
\proof Suppose there is such a path $P''$ that reaches a vertex $w \not \in \bigcup_{i = 0}^T S_i$. Then for $1 \leq i \leq |T| - |E(P'')|$ we try to get a path $P''_i = P'' \cup (w_1, \cdots, w_i)$ such that $P' \cup P''_i$ is a path in $G$ and $w_j \not \in S'_{T - j}$ for all $1 \leq j \leq i$.

\medskip

Indeed, assuming this holds for $i -1$, then $w_{i-1}$ has at least $2 \vep_1 d + T$ neighbors in $G'$ that are not in $S'_{T - i}$. Let us count the number of neighbors $u$ of $w_{i - 1}$ in $G'$ for which $(w_1, \cdots, w_{k - 1}, u)$ is not a good path. There are at most $\vep_1 d$ such $u$ for which
\[
u \in V( P'' \cup \{w_1, \cdots, w_{i - 1}, u \})
\]
There are at most $\vep_1 d$ such $u$ for which
\[
P'' \cup \{w_1, \cdots, w_{i - 1}, u \}
\]
is a weak terminal path. And there are at most $T - 1$ such $u$ for which
\[
C(w_{i - 1}, u) \in C(P'' \cup \{w_1, \cdots, w_{i - 1} \})
\]
All other such $u$ extend to good paths. So there are at most $2 \vep_1 d + T - 1$ neighbors of $w_{i -1}$ which don't extend to good paths. Thus we can pick $w_i \not \in S'_{T - i}$ such that $(w_1, \cdots, w_i)$ is a good path.

\medskip

We end up with a path that extends $P'$ by length $T$ that intersects $B$ at most $\vep_4 T/2 - 1$ times which is a contradiction so we have proven the claim.

\bigskip

Since $V(G'') \subseteq S'_T$,
\[
|V(G'') | \leq |S'_T| \leq (3/\vep)^T K_2 d
\]
So we can take $K_3 = (1/\vep) (3/\vep)^T K_2$.
\end{proof}
\end{section}
\begin{section}{The Dense Case}
In this Section we prove the remaining Lemmas stated in Section \ref{proof_overview}. All these Lemmas have analogues in \cite{BFMPY}. The main difference is that their Lemmas were suited for the regime where
\[
	\delta^{+ }(G) = \Omega(n) \text{ and } \sup_{v \in V(G)} |\deg^{+}(v) - \deg^{-}(v) | = o(n)
\]
But since we wish to apply Theorem \ref{sec_1_main}, we prove variations on these Lemmas so as to only require
\[
	\delta^{+ }(G) = \Omega(n)
\]
This adaptation is not too difficult and proofs are essentially the same as in \cite{BFMPY}. Because of this, we do not prove the Lemmas in too much detail.

\medskip

\textit{Restatement of Lemma \ref{expander_finder}}. For any $\delta, \tau, \alpha$, there exists $\nu$ such that for every positive integer $n$ and every $n$-vertex digraph $G$. Suppose that $\delta^{+}(G) \geq \alpha n$. Then there exists a non-empty induced subgraph $G'$ of $G$ satisfying:
\begin{enumerate}
\item[A1] $G'$ is a robust $(\nu, \tau)$-out-expander.
\item[A2] Every vertex $v \in V(G')$ satisfies $\deg^{+}_{G'}(v) \geq \deg^+_{G}(v) - \delta n$
\end{enumerate}
\begin{proof}
First define a sequence of parameters, $\nu \ll \gamma_0 \ll \gamma_1 \ll \cdots \ll  \gamma_t \ll \tau, \delta, \alpha$ where $t = \lfloor \log_{1 - \tau/2}(\alpha/2) \rfloor$. Then we attempt to define a sequence of sets $U_0, U_1, \cdots $ where, for each $i$, $U_{i + 1}$ is a subset of $U_i$ such that $\tau |U_i|/2 \leq |U_{i + 1} | \leq (1 - \tau) |U_i|/2 $ and
\[
\partial^+ U_{i + 1} \leq \gamma_{i + 1} n^2
\]
Let $t$ be the minimal index for which we cannot find such a $U_{t + 1}$.
\begin{claim} $|U_i| \geq \alpha n /2$ for all $1 \leq i \leq t$.
\end{claim}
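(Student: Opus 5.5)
The plan is a double count of the out-edges leaving $U_i$. Each vertex has out-degree at least $\alpha n$, so a small $U_i$ forces many edges out of $U_i$. This contradicts the requirement $\partial^+ U_i \leq \gamma_i n^2$, because $|U_i|$ is bounded below by a constant multiple of $n$ that is far larger than $\gamma_i n$. Throughout I take $U_0 = V(G)$, and $\partial^+ U$ to be the number of edges from $U$ to $V(G)\setminus U$.

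First I record an a priori lower bound on the sizes. By construction $|U_{i+1}| \geq \tau |U_i|/2$ for every $i<t$, so inductively $|U_i| \geq (\tau/2)^i n \geq (\tau/2)^t n$ for all $0 \le i \le t$. The key point is that $t = \lfloor \log_{1-\tau/2}(\alpha/2)\rfloor$ depends only on $\tau$ and $\alpha$. Hence $(\tau/2)^t$ is a positive constant depending only on $\tau,\alpha$, and the hierarchy $\gamma_0 \ll \cdots \ll \gamma_t \ll \tau,\alpha$ may be chosen so that
\[
\frac{2\gamma_i}{\alpha} < \Big(\frac{\tau}{2}\Big)^{t} \quad \text{for all } i \le t.
\]

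Next, suppose for contradiction that $|U_i| < \alpha n/2$ for some $1 \le i \le t$. Each $u \in U_i$ has at least $\alpha n$ out-neighbours, of which at most $|U_i| < \alpha n/2$ lie in $U_i$. So $u$ sends more than $\alpha n/2$ edges out of $U_i$, and summing over $u \in U_i$ gives
\[
\partial^+ U_i > |U_i| \cdot \frac{\alpha n}{2}.
\]
Combining this with $\partial^+ U_i \le \gamma_i n^2$ yields $|U_i| < 2\gamma_i n/\alpha < (\tau/2)^t n$. This contradicts the lower bound from the previous step, so $|U_i| \ge \alpha n/2$, as claimed.

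The only delicate point is the order of quantifiers in the constant hierarchy. The bound $(\tau/2)^t$ must be fixed before the $\gamma_i$ are chosen, and this is legitimate precisely because $t$ is a function of $\tau,\alpha$ alone. I also expect the stated upper bound $|U_{i+1}| \le (1-\tau)|U_i|/2$ to be a typo for $(1-\tau/2)|U_i|$, which is what the definition of $t$ suggests. That upper bound is not needed for this claim, which uses only the lower bound $|U_{i+1}| \ge \tau|U_i|/2$.
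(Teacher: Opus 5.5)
Your argument is correct and is essentially the paper's: assume some $U_i$ has $|U_i|<\alpha n/2$, double-count the edges leaving it using $\delta^+(G)\ge\alpha n$ to get $\partial^+U_i\ge|U_i|\alpha n/2$, and contradict $\partial^+U_i\le\gamma_i n^2$ using a lower bound on $|U_i|$. The only difference is that the paper takes the minimal bad index $t_0$, so $|U_{t_0}|\ge\tau|U_{t_0-1}|/2\ge\tau\alpha n/4$ and it only needs $\gamma_{t_0}<\tau\alpha^2/8$; your iterated bound $(\tau/2)^t n$ instead needs the $t$-dependent condition $2\gamma_i/\alpha<(\tau/2)^t$, which is still legitimate because $t$ depends only on $\tau$ and $\alpha$.
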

\begin{proof} Suppose this is not true and $t_0$ is the minimal index for which $|U_{t_0}| < \alpha n/2$. Then $|U_{t_0}| \geq \tau \alpha n/4$. So,
\[
\partial^+ U_{t_0} \geq \sum_{u \in U_{t_0} } \deg^+ (u) - |U_{t_0}|  ( |U_{t_0}| - 1 ) \geq |U_{t_0} | ( \alpha n - \alpha n/2 ) \geq \tau \alpha^2 n^2/8 > \gamma_{t_0} n^2
\]
which is a contradiction.
\end{proof}
We let
\[
U'_t = \{ v \in U_t : |N^{+} (v) \cap (V \setminus U_t) | < \sqrt { \gamma_{t - 1 } } n \}, Y_t = U_t \setminus U'_t.
\]
and define $G' = G[U'_t]$. We first check that \textbf{A2} holds. Note that
\[
|Y_t| \leq 1/(\sqrt \gamma_{t-1} n) \sum_{j = 1}^{t - 1} \gamma_j n^2 \leq 2 \sqrt{ \gamma_{t - 1}} n
\]
Therefore if $v \in U'_t$,
\[
\deg_{G'}^+ (v) \geq \deg_G^+ (v) - \sqrt{\gamma_t} n - |Y_t| \geq \deg_G^+(v) - 3 \sqrt{\gamma_{t- 1} }n \geq \deg_G^+(v) - \delta n
\]
This verifies \textbf{A2}. Now, let $W$ be a subset of $U'_t$ with $ \tau |U'_t| \leq |W| \leq (1 - \tau) |U'_t|$. Then,
\[
\partial_{G'}^+ (W) \geq \gamma_t n^2 - |Y_t| n \geq \gamma_t n^2/2
\]
Therefore, if we let $s = |\{u \in U'_t \setminus W : | n^{-} (u) \cap W | \geq \nu n \}| $, then
\[
(|U_t| - s) \nu |U_t| + s |W| \geq \gamma_t n^2 /2
\]
So $s \geq \nu |U_t|$. This verifies \textbf{A1} and so we have completed the proof.

\end{proof}
\medskip
\medskip
\medskip

\textit{Restatement of Lemma \ref {path_connecter}}. Let $\nu, \tau, \alpha, p, 1/M \leq 1$ be positive constants satisfying $\nu + \tau \leq \alpha$. Then for all sufficiently large $n$, the following holds: Let $G$ be a properly edge-colored directed graph on $n$ vertices. Suppose that $G$ is a robust $(\nu, \tau)$-out-expander with $\delta^+(G) \geq \alpha n$. Let $V_0, C_0$ be independent $p$-random subsets of $V(G), C(G)$, respectively. Then with high probability $1 - o_n(1)$ the following all hold
\begin{enumerate}
\item[ \textbf{B1}] $|V_0| \leq 2pn$
\item[ \textbf{B2}] Every vertex $v \in V(G)$ satisfies
\[
\deg^+ (v, V \setminus V_0 ; C \setminus C_0 ) \geq (1 - 3p) \deg^+(v)
\]
\[
\deg^{+} (v, V_0, C_0) \geq  p^2 \deg^{+}(v)/3
\]

\item[ \textbf{B3}] For any two distinct vertices $u, v \in V(G)$ with $\deg^{-}(v) \geq  (\tau + \nu) n$ and for any vertex subset $V_1 \subseteq V_0$ and color subset $C_1 \subseteq C_0$ each of size at most $M$, there exists a rainbow directed path of length at most $\nu^{-1} + 1$ from $u$ to $v$ in $G$ whose internal vertices are in $V_0 \setminus V_1$ and whose colours are in $C_0 \setminus C_1$.
\end{enumerate}
\begin{proof} As long as $n$ is sufficiently large, \textbf{B1} and \textbf{B2} both hold by the chernoff bound.

\medskip

For every vertex $u \in V(G)$, let $N_{0, u} = N^+(u)$ and for $1 \leq i \leq \nu^{-1}$, let $N_{i, u} = N_{i - 1, u} \cup RN^+_{\nu, G} (N_{i - 1, u})$.

\medskip

For vertices $u, v \in V(G)$ with $v \in RN^+_{\nu/2} (N_{0, u} )$ let
\[
Y_{u, v, 0} = \{ w \in N_{0, u} : v \in N^+(w), C(u, w) \in C_0, C(w, v) \in C_0, w \in V_0 \}
\]
and for $1 \leq i \leq \nu^{-1}$, for vertices $u, v$ with $v \in RN^+_{\nu/2} ( N_{i, u} \setminus N_{0, u} )$, let
\[
Y_{u, v, i} = \{ w \in N_{i, u} \setminus N_{0, u} : v \in N^+(w), C(w, v) \in C_0, w \in V_0 \}
\]
By chernoff bound and union bound $Y_{u, v, i} \geq 3(M  + 1) + 2\nu^{-1} + \sqrt n $ for all pairs of vertices $u$ and $v$ on which $Y_{u, v, i}$ is defined. (We note that the Chernoff bound gives a much stronger inequality but this is all we need). 

\medskip

Now fix a pair $C_0, V_0$ for which \textbf{B1},\textbf{B2} and the above bounds on $Y_{u, v, i}$ all hold. We will show that \textbf{B3} holds with high probability as long as $n$ is sufficiently large. For vertices $u, w \in V(G)$, let $C_{u, w} = \{ v \in V(G):  C(u, v) = C(v, w) \}$ and let
\[
H_{u} = \{ w \in V(G) :  | C_{u, w } | \geq \sqrt n \}
\]
and for $v \in V(G)$, let
\[
F_{v} = \{ w \in V(G) : v \in H_w \}
\]
Note that since $C$ is a proper edge coloring, $|H_{u}| \leq \sqrt n$ and $|F_{v}| \leq \sqrt n$. 
\begin{claim} Let $v \in V(G)$ with $\deg^{-}(v) \geq 3(\tau + \nu)$. For all subsets $C_1 \subseteq C_0$, $V_1 \subseteq V_0$ each of size at most $M + 1$ and for all $u \in V \setminus (F_v \cup V_1)$ there exists a path of length at most $\nu^{-1}$ from $u$ to $v$ with colors and internal vertices in  $G[ V_0 \setminus V_1, C_0 \setminus C_1]$.  
\end{claim}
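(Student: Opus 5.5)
Since $\deg^-(v)$ is large, I plan to grow the sets $N_{i,u}$ forward from $u$ until one of them robustly covers $v$, and then walk backwards through the random sets $Y_{u,v,i}$ to build a rainbow path, using the fact that $u\notin F_v$ to rule out the one dangerous color repetition.

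\textbf{Step 1: the expansion reaches $v$.} Fix $v$ with $\deg^-(v)$ large and $u\notin F_v\cup V_1$. I first show that within $\nu^{-1}$ steps some $N_{i,u}$ has size at least $(1-\tau)n$. We have $|N_{0,u}|\ge \alpha n\ge \tau n$. While $\tau n\le |N_{i,u}|\le (1-\tau)n$, the robust out-expansion of $G$ gives $|N_{i+1,u}|\ge |N_{i,u}|+\nu n$, so this happens for some $i\le \nu^{-1}$. Once $|N_{i,u}|\ge(1-\tau)n$, the hypothesis $\deg^-(v)\ge (\tau+\nu)n$ gives
\[
|N^-(v)\cap N_{i,u}|\ge \nu n .
\]
Take the least such $i$. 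Then $v\in RN^+_{\nu/2}(N_{0,u})$ or $v\in RN^+_{\nu/2}(N_{i,u}\setminus N_{0,u})$, because an intersection of size $\nu n$ splits into two parts, one of size at least $\nu n/2$. So the relevant $Y_{u,v,j}$ is defined.

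\textbf{Step 2: build the path backwards.} In the case $j=0$, choose $w\in Y_{u,v,0}$ avoiding $V_1$ and with $C(u,w),C(w,v)\notin C_1$. A proper coloring forbids at most $|V_1|+2|C_1|\le 3(M+1)$ choices. The edges $uw$ and $wv$ have different colors unless $w\in C_{u,v}$. Since $u\notin F_v$, we have $v\notin H_u$, so $|C_{u,v}|<\sqrt n$. The lower bound on $|Y_{u,v,0}|$ therefore leaves a valid choice of $w$.

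In the case $j\ge1$, choose $w\in Y_{u,v,j}$ with $w\notin V_1$ and $C(w,v)\notin C_1$. Since $w\in N_{j,u}$, it lies in $RN^+_\nu(N_{j-1,u})$, so I recurse from $w$ at level $j-1$, treating $w$ as the new target. Before doing so I enlarge the forbidden sets to $V_1\cup\{w\}$ and $C_1\cup\{C(w,v)\}$. This recursion needs the $Y$ sets to be defined relative to the robust threshold, and the size bound on $Y$ (which includes the $2\nu^{-1}$ slack) absorbs the forbidden sets growing by at most one vertex and one color per step.

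\textbf{Main obstacle: rainbowness.} The hard part is keeping the whole path rainbow across all recursion levels, since the forbidden sets grow at every step. I handle it by building the path from the $v$ end backwards, adding each used color to $C_1$ and each used vertex to $V_1$; this keeps the accumulated sets at size at most $M+1+\nu^{-1}$. At the final step, where the path meets $u$'s out-neighbourhood $N_{0,u}$, I have to rule out $C(u,w')$ coinciding with the color of the next edge. To do this I recurse with $u$ and an intermediate target, which is where $u\notin F_{\text{target}}$ is needed. I expect to handle this by also discarding the at most $\sqrt n$ vertices $w$ with $u\in F_w$, which is affordable because of the $\sqrt n$ slack in the size bound on $Y$.
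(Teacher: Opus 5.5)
Your proposal is correct and follows essentially the same route as the paper. You grow the $N_{i,u}$ forward until $v$ is robustly reached, then build the path backwards from $v$ through the sets $Y_{u,\cdot,\cdot}$ while accumulating forbidden colours, and your plan to discard the vertices $w$ with $u\in F_w$ is exactly the paper's choice $w\in Y_{u,v',i_0}\setminus H_u$, which keeps $v'\notin H_u$ so that $|C_{u,v'}|<\sqrt n$ at the final step.
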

Before proving our claim, let us see why it implies \textbf{C3}. Let $u$ and $v$ be an arbitrary pair of vertices with $\deg^{-}(v) \geq (\tau + \nu)n$ and $V_1 \subseteq V_0$ and $C_1 \subseteq C_0$ be sets of size at most $M$. Since $N^+ (u, V_0, C_0) \geq p^2 \deg^{+}(v)/3 \geq p^2 \alpha n/3$, there exists a $w \in N^+(u, V_0 \setminus (V_1 \cup F_v), C_0 \setminus C_1)$. Now we may apply the above claim to find a path from $w$ to $v$ which avoid colors $C_1 \cup \{ C(u, w) \}$ and avoids vertices $V_1 \cup \{u\}$. We then pre-append $u$ to the resulting path.

\medskip

\textit{Proof of Claim.} Since $G$ is a $(\nu, \tau)$-out-expander and $\alpha \geq \tau$, $|N_{0, u}| \geq \tau$ and for $1 \leq i \leq \nu^{-1}$,
\[
|N_{i, u}| \geq \min \{ (1 - \tau)n, |N_{i - 1, u} | + \tau n \}
\]
Therefore $|N_{\nu^{-1}, u} - 1| \geq (1 - \tau) n$. Since $ \deg^- (v) \geq (\tau + \nu) n$,
\[
v \in  N_{\nu^{-1}, u} 
\]
We then do the following algorithm to find a path from $u$ to $v$. Define a reverse path to be a sequence of vertices $(w_1, \cdots, w_k)$ such that $(w_k, \cdots, w_1)$ is a path in $G$. At all steps in our algorithm we will have a vertex $v' \in V(G) \setminus H_{u}$, a color set $C$ with $|C| < (M + \nu^{-1})$, a reverse path $Q$ ending at $v'$ with $C(Q) \subseteq (C \setminus C_1)$, and an index $i_0$ such that $v' \in N_{i_0, u}$.

\medskip

At each step of our algorithm we will either output a valid path satisfying $\textbf{B3}$ or decrease $i_0$. We begin with $v' = v$, $Q = \{v\}$, $C = C_1$, and $i_0$ the smallest positive integer for which $v \in N_{i_0, u}$. Then while $i_0 > 0$ we do the following:

\medskip

Since $v' \in N_{i, u}$, we have either $v' \in RN^+_{\nu/2} (N_{0, u} )$ or $v' \in RN^+_{\nu/2} ( N_{i_0 - 1, u} \setminus N_{0, u} )$.

\medskip

If the former is the case then by assumption $Y_{u, v', 0} \geq 3(M + 1) + 2\nu^{-1} + \sqrt n$ so there is some $w \in Y_{u, v', 0}$ such that $w \not \in V_1$, $C(u, w) \not \in C$, $C(w, v') \not \in C$, and $C(u, w) \neq C(w, v')$. Then we can update $Q$ to $Q \cup \{v', u\}$ and the path obtained by reversing the vertices in $Q$ satisfies \textbf{B3}.

\medskip 

On the other hand, suppose that $v' \in RN^+_{\nu/2} (N_{i_0 - 1, u } \setminus N_{0, u} )$. Then since $|Y_{u, v', i_0}| \geq 3(M + 1) + 2 \nu^{-1} + \sqrt n$, there is some $w \in Y_{u, v', i_0} \setminus H_u$ such that $C(w, v') \not \in C$ and $w \not \in V_1$. We then add $C(w, v')$ to $C$, append $w$ to $Q$, set $v' = w$ and set $i_0$ to be the smallest positive integer $i$ for which $w \in N_{i, u}$. It is easy to check that all requirements hold. So we are done.
\end{proof}
\textit{Restatement of Lemma \label{path_forest}} Let $\vep > 0$ and $G$ be an $n$-vertex properly colored directed graph  with $\delta^+(G) \geq d$ where $1/d \ll \vep$. Then for every $\vep > 0$ and vertices $V = \{v_0, \cdots, v_{4/\vep^2} \}$, there exists a family of vertex-disjoint, color-disjoint paths $\mathcal P = \{ P_0, \cdots, P_{4/\vep^2} \}$ such that the total length of all paths is at least $(1 - \vep) d$ and the first vertex in $P_i$ is $v_i$ for all $i$.
\begin{proof}
Let $t := 4/\vep^2$ and let $P_0, \cdots, P_{t}$ be such of set of paths of maximum possible length and assume towards contradiction that the total length is less than $(1 - \vep)d$. Then there must exist a subset of these paths $\mathcal P' = P'_1, \cdots, P'_{\sqrt t}$ such that the total length of these paths is at most $\vep d/2$. Let $H := \{h_0, \cdots, h_{2/\vep} \}$ be the endpoints of these paths. And let $\mathcal P^* = \mathcal P \setminus \mathcal P'$. Let $C_0$ be the set of colors not used in $\mathcal P$ and let $G_0$ be the subgraph with these colors and let $N_0$ be the neighborhood of $h_0$ in $G_0$. Additionally set $E_0 = \emptyset$. For every $i$, we will define $G_i, C_i, N_i, E_i$ so that $N_i \subseteq V(\mathcal P)$, $|N_i| \geq \vep (i + 2) /2 $ and $N_i = N^+_{G_i}(h_i)$. It is easy to see that $N_0$ satisfies these as otherwise we would be able to extend $P_1$. We define $G_i, C_i, N_i$ inductively as follows:
\begin{itemize}
	\item We let $E'_{i + 1} = \{ (u, v) \in \mathcal P^{*} : v \in N_{i} \setminus (P' \cup V) \}$ and set $E_{i + 1} = E_{i} \cup E'_{i + 1}$.
	\item We let $C'_{i + 1}$ be the set of colors of edges in $E_{i + 1}$
		There are at least $|N_i| - \vep d/2$ vertices in $N_i$ that are not in $V \cup V(\mathcal P')$. As we will see, $|N_i| \geq \vep(i + 3) d/2$ and so since all the $P_i$ are color disjoint we have $|C'_{i + 1}| \geq |N_i| - \vep d/2 \geq \vep (i + 1) d/2$. Let $C_{i + 1} = C'_{i + 1} \cup C_i$. Since $C_0$ contains no colors in any of the $P_i$, we have $|C_{i + 1}| \geq |C_0| + |C'_{i + 1}| \geq |C(G)| - d + \vep(i + 3) d/2$.
	\item We define $G_{i + 1}$ to be the subgraph of $G$ with colors in $C_{i + 1}$
	\item We define $N_{i + 1}$ to be the neighborhood of $h_{i + 1}$ in $G_{i + 1}$. Since $h_{i + 1}$ has degree at least $d$ and $|C_{i + 1}| \geq |C(G)| - d + \vep (i + 3)d/2$, $|N_{i + 1}| \geq \vep (i + 3) d /2$.
\end{itemize}
All that remains to verify is that $N_i \subseteq V(\mathcal P)$. Suppose towards contradiction it is not and $i$ is the minimum such index. Then there is a neighbor $x \not \in V( \mathcal P)$ of $h_i$ such that $C(h_i, x) \in C_i$. We add the edge $(h_i, x)$ to $\mathcal P$.

\medskip 

Then we define a sequence $i = j_0 > j_1 > \cdots > j_k = 0$ and sequence of colors $c_0, \cdots, c_k$ where $c_0 = C(h_i, x)$. We define them inductively while doing the as following operations to $\mathcal P$:
For $1 \leq \ell \leq k$ we let $j_{\ell}$ be the minimal index for which $c_{\ell - 1} \in C_{j_\ell}$. Suppose that $j_{\ell} > 0$. Then by definition of $C_{j_\ell}$, there is an edge $(u_\ell, v_\ell) \in E_{j_\ell}$ with color $c_{\ell - 1}$. Since we chose $j_\ell$ to be the minimal index, that edge is in $E'_{j_{\ell}}$. Therefore, $(h_{j_{\ell} - 1}, v_\ell)$ is an edge with color in $C_{j_{\ell} - 1}$. We let $c_{\ell} = C( h_{j_{\ell} - 1}, v_\ell)$ and we modify $\mathcal P$ by adding the edge $(h_{ {j_\ell} - 1}, v_{\ell})$ and deleting the edge $(v_\ell, u_\ell)$ already in $\mathcal P$.

\medskip

We claim that at step $1 \leq \ell \leq k - 1$, $\mathcal P$ consists of $t$ paths containing paths $P''_1, \cdots, P''_{\sqrt t} \in \mathcal P$ where $P''_{s} = P'_s$ for $s < j_{\ell}$ and $P''_s$ contains $P'_s$ as a prefix for $s \geq j_{\ell}$. Additionally, every edge $e \in E_{j_\ell - 1}$ is in $\mathcal P$. Finally $\mathcal P$ is rainbow except for the edge $(h_{j_\ell}, u)$ and the number of edges in $\mathcal P$ is one greater than it was at the beginning of the process.
This is easy to verify by induction.

\medskip

Finally, we reach $\ell = k$. At this step the edge $C(h_{j_\ell}, u) \in C_0$ so after adding this edge and deleting $(w, u)$, $\mathcal P$ is rainbow and contains strictly more edges at the beginning. And $V_0, \cdots V_{t}$ remain the starting vertices of the paths in $\mathcal P$. This gives a contradiction so we get $N_i \subseteq V(\mathcal P)$ for all $i$. But this means for $i = \sqrt{t}$, $|C_{i + 1}| > |C(G)|$ which is a contradiction.
\end{proof}
\end{section}

\begin{section}{Proof of Theorem \ref{main}} \label{main_proof}
\textit{Proof of Theorem \ref{main}.} Suppose towards contradiction that Theorem \ref{main} is false so there exists an $\vep > 0$ and arbitrarily large $d$ for which there exists a $d$-regular properly edge colored directed graph $G$ with no path of length $(1 - \vep) d$.

\medskip

Fix $\gamma , \vep_1 \ll \vep$. Apply Theorem \ref{sec_1_main} to get a constant $K$, a rainbow path $P$, and an induced subgraph $G' \subseteq G[V \setminus V(P), C \setminus C(P)]$ such that for every $u \in G'$, there exists a rainbow path extending $P$ by length at most $K$ that ends at $u$, $V(G') \leq K d'$, and $\delta^+(G') \geq (1 - \gamma)d'$
\medskip
Let $\delta \ll \tau \ll \gamma, 1/K$, and $\alpha = 1/(2K)$ and apply Lemma \ref{expander_finder} with the $\delta$ replaced by $\delta/K$ to get a $(\nu, \tau)$-out-expander $G''$ with $\delta^+(G'') \geq (1 - \gamma - \delta) d'$. We may assume that $\nu \ll \tau$.

\medskip

Let $P'$ be a minimal length path extending $P$ with an end in $G''$. By Lemma \ref{sec_1_main}, $|P'| \leq |P| + K$. Apply Lemma \ref{path_connecter} with $\tau \ll p^2 \ll \gamma$ and $M = 4 \nu^{-1} /\gamma^2$ to get $p$-random subsets $V_0$, $C_0$ of $V(G'')$, $C(G'')$ respectivly. We may assume that no vertices or colors in $P' - P$ occur in $V_0$ or $C_0$ respectivly since this happens with positive constant probability. Let $H = G''[V(G'') \setminus V_0, C(G'') \setminus C_0]$.

\medskip

By Lemma \ref{path_connecter}, $\delta^+ (H) \geq (1 - \gamma - \delta - 3p) d'$ so the average in-degree of $H$ is at least $(1 - \gamma - \delta - 3p) d'$. Since $\Delta^{-}(H) \leq d \leq d'/\gamma$ there are at least
	\[
	|V(H)| \gamma (1 - \gamma - \delta - 3p - K(\nu + \tau) ) \geq d' \gamma (1 - \gamma - \delta - 3p - K(\nu + \tau) ) \geq 4/\gamma^2
	\]
vertices $v \in V(H)$ with
	\[
\deg^{-}(v) \geq  (\nu + \tau) K d'
	\]
So we can find vertices $\{v_1, \cdots, v_{4/\gamma^2} \} \in V(H)$ each with in-degree at least $ [3(\nu + \tau)/p]K d'$. Let $v_0 = \End(P')$ and apply Lemma \ref{path_forest} replacing $\vep$ with $\gamma$ to get a family of path $\mathcal P = \{P_0, \cdots, P_{ 4/\gamma^2 } \}$ with total length at least $(1 - 2 \gamma - \delta - 3p)d'$ such that the first vertex of $P_i$ is $v_i$ for all $0 \leq i \leq 4/\gamma^2$.

\medskip

Since
	\[
	\deg^{-} (v_i) \geq (\nu + \tau) K d' \geq  (\nu + \tau) |V(G''')|
	\]
for every $1 \leq i \leq 4/\gamma^2$, by Lemma \ref{path_connecter}, we may in sequence connect $\End(P_{i - 1})$ to $V_i$ with a rainbow-path of length at most $\nu^{-1} + 1$ using colors in $C_0$ and internal vertices in $V_0$, at each step removing the set of vertices and colors used in previous paths. This is possible because in total we end up removing at most $4\nu^{-1}/\gamma^2 = M$ vertices and colors. Thus we end up with a rainbow path of length at least
	\[
	|P| + (d - |P|) (1 - 2 \gamma - \delta - 3p) \geq (1 - 2 \gamma - \delta -3p) d \geq (1 - \vep) d
	\]
So we have completed the proof.
\end{section}
\begin{section}{Concluding Remarks}
Our techniques are capable of weakening the assumtions of Theorem \ref{main} from being $d$-regular to $\delta^+(G) \geq d$ and $\Delta^-(G) = O(d)$. It would be nice to remove the condition $\Delta^-(G) = O(d)$ so that we could prove every properly edge-colored directed graph contains a path of length $\delta^+(G)$. Despite using $\Delta^-(G) = O(d)$ frequently in the proof of Theorem \ref{main}, it is possible to avoid all of these except in the proof of Lemma \ref{short_ends}. However this seems to be a major roadblock. For example consider consider the $d$-ary tree of height $d/2$ with every edge a unique color and directed away from the root. Then, for every leaf $u$, letting $P$ be the path from the root of the tree to $u$, draw $d/2$ edges from $u$ to other leaves using colors $C(P)$ and draw and edge from $u$ to each vertex in $V(P)$ using new colors. The resulting graph can easily be made to be properly edge-colored and has minimum out-degree $\delta^+(G)$ but
\begin{itemize}
\item The longest rainbow path starting at the root has length $d/2$.
\item Lemma \ref{short_ends} fails horribly on $G$.
\end{itemize}
Thus, to remove the $\Delta^-(G) = O(d)$ requirement, it seems that major new ideas are needed.
\end{section}
\section{Acknowledgments}
We would like to thank Jaques Verstraete for helpful conversations. We would also like to thank Bryce Frederickson for suggesting a way to improve the constants in Lemma \ref{sec_1_main}.


\begin{thebibliography}{99}
\bibitem{Al} N. Alon, \textit{Combinatorial Nullstellensatz}, Combin. Probab. Comput. 8 (1999), no. 1-2, 7-29.
\bibitem{APS} N. Alon, A. Pokrovskiy, and B. Sudakov, \textit{Random subgraphs of properly edg-coloured complete graphs and l ong rainbow cycles}, Israel J. Math. 222 (2017), 317-331.
\bibitem{AG} B. Alspach and H. Gavlas, \textit{Cycle decompositions of $K_n$ and $K_{n - 1}$}, J. Comb. Theory Ser. B 81 (2001), no . 1, 77-99.
\bibitem{AGSH} B. Alspach, H. Gavlas, M. \u Sajna, and V.H., \textit{Cycle decompositions IV: complete directed graphs and fixed length directed cycles}, J. Comb. Theory Ser. A 103 (2003), no. 1, 165-208.
\bibitem{AL} B. Alspach and G. Liversidge, \textit{On strongly sequenceable abelian groups}, Art Discrete Appl. Math. (2020).
\bibitem{An} L. D. Andersen, \textit{Hamilton circuits with many coulours in properly edge-colored complete graphs}, Math. Scand. 64 (1989), no. 1, 5-14
\bibitem{ADMS} D. S. Archdeacon, J. H. Dinitz, A. Mattern, and D. R. Stinson, \textit{On partial sums in cyclic groups}, J. Combin. Math. Combin. Comput. 98 (2016), 327-342.
\bibitem{BCR} J. Babu, L. S. Chandran, and D. Rajendraprasad, \textit{Heterochromatic paths in edge colored graphs without small cycles and heterochromatic-triangle-free graphs}, Eur. J. Comb. 48 (2015), 110-126
\bibitem{BM} J. Balogh and T. Molla, \textit{Long rainbow cycles and Hamiltonian cycles using many colors in properly edge-colored complete graphs}, Eur. J. Comb. 79 (2019), 140-151
\bibitem{BJ} S. T. Bate and B. Jonoes, \textit{A review of uniform cross over designs}, J. Statist. Plann. Inference 138 (2008), no. 2, 336-351.
\bibitem{Go} B. Gordon, \textit{Sequences in groups with distinct partial products}, Pacific J. Math. \textbf{11} (1961), 1309-1313.
\bibitem{Ke} A. Keedwell, P. Cameron, J. Hirschfeld, and D. Hughes, \textit{Sequenceable groups: a survey} LMS Lecture Notes \textbf{49} (1981), 205-215.
\bibitem{MP} A. M\''{u}yesser and A. Pokrovskiy, \textit{A random Hall-Paige conjecture}, Invent. Math. (to appear), arXiv::2204.09666 (2022).
\bibitem{Gr} R. L. Graham, \textit{On sums of integers taken from a fixed sequence}, Proceedings of the Washington State University Conference on Number Theory (Washington State Univ., Pullman, Wash., 1971), Washington State University, Department of Mathematics, Pi Mu Epsilon, Pullman, WA, 1971, pp. 22-40.
\bibitem{Kr} N. Kravitz, \textit{Rearranging small sets for distinct partial sums}, (2024), arXiv:2409.07403
\bibitem{Sa} W. Sawin, MathOverflow (2015), comment on the post "Ordering subsets of the cyclic group to give distinct partial sums".
\bibitem{BK} B. Bedert and N. Kravitz, \textit{Graham's rearrangement conjecture beyond the rectification barrier}, (2024), Isr. J. Math. (to appear), arXiv:2409.07403.
\bibitem{CMPP} S. Costa, F. Morini, A. Pasotti, and M. A. Pellegrini, \textit{A problem on partial sums in abelian groups,} Discrete Math. \textbf{341} (2018), no. 3, 705-712.
\bibitem{CDFO} S. Costa, S. Della Fiore, M. Ollis, and S. Z. Rovner-Frydman, \textit{On sequences in cyclic groups with distinct partial sums}, arXiv:2204.16658 (2022)
\bibitem{Ha} G. Hahn, Un jeu de coloration, Regards sur la theorie des graphs, Actes du Colloque de Cerisy \textbf{12} (1980).
\bibitem{MM} M. Maamoun and H. Meyniel, \textit{On a problem of G. Hahn about Coloured hamiltonian paths in $K_{2t}$,} Discrete Math. \textbf{51} (1984), no. 2, 213-214.
\bibitem{Sc} L. Schrijver, \textit{Rainbow paths in edge-coloured regular graphs,} Manuscript.
\bibitem{JPS} D. Johnston, C. Palmer, and A. Sakar, \textit{Rainbow Tur\'{a}n numbers of paths and other trees,} Australas. J. Combin. \textbf{78} (2020), 61-72.
\bibitem{BFMPY} M. Bucic\'{c}, B. Frederickson, A. M\"{u}yesser, A. Pokrovskiy, \textit{Towards Graham's rearrangement conjecture via rainbow paths}, arXiv:2503.01825.
\bibitem{BBK} Benjamin Bedert, Matija Buci\c, Noah Kravitz, Richard Montgomery, Alp M\"yesser. On Graham's Rearrangement Conjecture over $F^n_2$, arXiv:2508.18254 (2025)
\bibitem{BBK} Bedert, B., Buci\c, M., Kravitz, N., Montgomery, R., M\'yesser, A. (2025). On Graham's Rearrangement Conjecture over $F^n_2$. arXiv:2508.18254.
\bibitem{PS} Pham, H. T., Sauermann, L. (2026). On Graham's Rearrangement Conjecture. arXiv:2602.15797.
\end{thebibliography}
\end{document}